\DeclareMathAlphabet{\mymathbb}{U}{BOONDOX-ds}{m}{n}
\theoremstyle{plain}
\newtheorem{theorem}{Theorem}[section]
\newtheorem{proposition}[theorem]{Proposition}
\newtheorem{claim}[theorem]{Claim}
\newtheorem{corollary}[theorem]{Corollary}
\newtheorem{lemma}[theorem]{Lemma}
\newtheorem{observation}[theorem]{Observation}
\newtheorem{question}[theorem]{Question}
\newtheorem{conjecture}[theorem]{Conjecture}
\newtheorem*{theorem*}{Theorem}
\newtheorem*{corollary*}{Corollary}
\theoremstyle{definition}
\title{Large flames in rooted acyclic digraphs without backward-infinite paths}
\author{Attila Jo\'{o}}
\author{Qiuzhenyu Tao}
\address{Attila Jo\'{o}, Department of Mathematics, Technion, Haifa, Israel 32000.}
\address{Qiuzhenyu Tao, Universit{\"a}t Hamburg, Department of Mathematics, Bundesstra{\ss}e~55 (Geomatikum), 20146~Hamburg, Germany.}
\email{a.joo@technion.ac.il; qiuzhenyu.tao@uni-hamburg.de}
\keywords{flame, acyclic digraph, Erdős-Menger cut}
\subjclass[2020]{Primary: 05C63  Secondary: 05C40, 05C20} 
\begin{document}
\begin{abstract}
An $r$-rooted digraph is a flame if for each non-root vertex $v$, there is a set of edge-disjoint directed paths from $r$ to $v$ that covers all ingoing 
edges of $v$. The study of flames was initiated by Lovász, who showed that in a finite rooted digraph, the edge-minimal subgraphs that preserve all 
local edge-connectivities from the root are always flames. It is known that the edge sets of the flame subgraphs of any finite rooted digraph form a 
greedoid. Szeszlér showed recently that if the digraph is acyclic, then the bases of this greedoid are the bases of a matroid. 
We show that a suitable formulation of Szeszlér's theorem is valid for infinite digraphs under the additional assumption that there are no 
backward-infinite directed paths (which assumption is indeed essential). We also prove that the ``correct'' infinite generalisation of Lovász's theorem 
also holds for this class of digraphs.
\end{abstract}
\maketitle

\section{Introduction}
An $r$-rooted digraph is a digraph $D=(V,E)$ (where parallel edges are allowed, but loops are not) with a prescribed root vertex $r$ that has no 
ingoing edges. Assume that $D$ is finite. We seek a smallest possible set $L\subseteq E$ that preserves all local edge-connectivities from the root, 
i.e., $\lambda_D(r,v)=\lambda_{D(L)}(r,v)$ for each $v\in V\setminus \{ r \}$. Here, $D(L):=(V,L)$ and $\lambda_D(r,v)$ denotes the local 
edge-connectivity from $r$ to $v$, i.e., the maximal number of edge-disjoint $rv$-paths in $D$. 
Clearly, for each $v\in V\setminus \{ r \}$, such an $L$ contains at least $\lambda_D(r,v)$ ingoing edges of $v$, which implies the lower bound 
$\left|L \right| \geq \sum_{v\in V \setminus \{ r 
\}}\lambda_D(r,v) $. Lovász showed that, perhaps surprisingly, this lower bound is always sharp (see \cite[Theorem 2]{lovasz1973connectivity}). 
To recall several related results, we now introduce some notations.

By a ``path'', we always mean a directed path, and we identify paths with their edge sets. Let $D=(V,E)$ be an $r$-rooted digraph. For $v\in V\setminus \{ r \}$, we donte by $\delta_{D}(v)$ the set 
of ingoing edges of $v$, and we define $\mathcal{G}_D(v)$ to be the collection of sets $I\subseteq \delta_{D}(v)$, for which there is a set of 
edge-disjoint $rv$-paths (i.e. paths from $r$ to $v$) in $D$ that covers $I$. Furthermore, let 
\[ \mathcal{G}(D):=  \left\lbrace \bigcup_{v\in V\setminus \{ r \}}I_v:\ (\forall v\in V\setminus \{ r \})(I_v\in \mathcal{G}_D(v))\right\rbrace. \] 
We call $D=(V,E)$ a \emph{flame} if 
$E\in \mathcal{G}(D)$. It is worth mentioning that if $D$ is finite, then
$M_{v,D}:=(\delta_{D}(v),\mathcal{G}_D(v))$ is a gammoid\footnote{In fact, gammoids are exactly those matroids that can be represented this 
way.}. 
Therefore, $M_D:=(E,\mathcal{G}(D))$ is the direct sum of these gammoids and hence a gammoid itself that we call the \emph{edge-connectivity 
gammoid} of $D$. 

Fix a (possibly infinite) $r$-rooted digraph 
$D=(V,E)$. A set $F\subseteq E$ is called a flame if the $r$-rooted spanning subdigraph $D(F):=(V,F)$ is a flame. Note that while every flame is an element of 
$\mathcal{G}(D)$, the converse does not hold in general.
\begin{theorem}[{Joó, \cite{jooGreedoidFlame2021}}]\label{thm: flame greedoid}
 In any finite $r$-rooted digraph $D$, the flames in $D$ form a greedoid.\footnote{In other words, if $F$ and $F'$ are flames in $D$ with $\left|F 
 \right|<\left|F' \right|$, then there is an $e\in F' \setminus F$ such that $F\cup \{ e \}$ is a flame.}
 \end{theorem}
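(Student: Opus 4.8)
The plan is to replace the defining condition of a flame by a cut condition, translate "adding an edge keeps the flame property" into a membership statement about maximal minimum cuts, and then close the exchange argument via an uncrossing lemma. Throughout fix the finite $r$-rooted digraph $D=(V,E)$, abbreviate $\lambda_F:=\lambda_{D(F)}$, and for $Z\subseteq V$ let $d^-_F(Z)$ be the number of edges of $D(F)$ with head in $Z$ and tail outside $Z$ (so $d^-_F(v):=d^-_F(\{v\})$ is the in-degree of $v$ in $D(F)$).

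First I would establish the characterisation: $F\subseteq E$ is a flame if and only if $\lambda_F(r,v)=d^-_F(v)$ for every $v\in V\setminus\{r\}$. Indeed, since the sets $\delta_{D(F)}(v)$ (for $v\ne r$) partition $F$, being a flame amounts to $\delta_{D(F)}(v)\in\mathcal G_{D(F)}(v)$ for each $v$; and as $\lambda_F(r,v)\le d^-_F(v)$ always holds, Menger's theorem turns this into the displayed equality. In particular $\emptyset$ is a flame. Next, for a flame $F$ and an edge $e=(u,v)\notin F$, adding $e$ raises only the in-degree of $v$, and by monotonicity of $\lambda$ the flame equation is automatically retained at every $w\ne v$; hence $F\cup\{e\}$ is a flame iff $\lambda_{F\cup\{e\}}(r,v)=\lambda_F(r,v)+1$. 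Using that $d^-_F$ is submodular, let $X^*_{F,v}$ be the unique maximal $Z$ with $v\in Z$, $r\notin Z$ and $d^-_F(Z)=\lambda_F(r,v)$; the usual min-cut bookkeeping for inserting $e=(u,v)$ then shows: $F\cup\{e\}$ is a flame iff $u\notin X^*_{F,v}$.

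The key ingredient is the relation $u\preceq_F v$ defined by $u\in X^*_{F,v}$. I would show that $\preceq_F$ is a preorder on $V\setminus\{r\}$: reflexivity is immediate, and transitivity is an uncrossing argument — if $u\in X^*_{F,w}$ and $w\in X^*_{F,v}$, then $w$ lies in $X^*_{F,w}\cap X^*_{F,v}$, so $d^-_F\bigl(X^*_{F,w}\cap X^*_{F,v}\bigr)\ge\lambda_F(r,w)=d^-_F(X^*_{F,w})$, and submodularity gives $d^-_F\bigl(X^*_{F,w}\cup X^*_{F,v}\bigr)\le d^-_F(X^*_{F,v})=\lambda_F(r,v)$; the union is therefore again a minimum $rv$-cut side, hence lies in $X^*_{F,v}$ by maximality, forcing $u\in X^*_{F,v}$. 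In particular every set $X^*_{F,v}$ is downward closed under $\preceq_F$, and $b\in X^*_{F,v}$ implies $X^*_{F,b}\subseteq X^*_{F,v}$.

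Finally, for the exchange property let $F,F'$ be flames with $|F'|>|F|$, and suppose toward a contradiction that $F\cup\{e\}$ is not a flame for any $e\in F'\setminus F$; by the previous step every $e=(u,v)\in F'\setminus F$ satisfies $u\preceq_F v$. From $\sum_{v}d^-_F(v)=|F|<|F'|=\sum_{v}d^-_{F'}(v)$ pick $v_0$ with $d^-_{F'}(v_0)>d^-_F(v_0)$ and set $X:=X^*_{F,v_0}$. Every edge of $F'$ entering $X$ already lies in $F$: if $(a,b)\in F'\setminus F$ had $b\in X$ and $a\notin X$, then $a\in X^*_{F,b}\subseteq X^*_{F,v_0}=X$ (using $b\in X$ and downward closedness), contradicting $a\notin X$. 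Hence $d^-_{F'}(X)\le d^-_F(X)$, whereas $d^-_{F'}(X)\ge\lambda_{F'}(r,v_0)=d^-_{F'}(v_0)>d^-_F(v_0)=\lambda_F(r,v_0)=d^-_F(X)$, a contradiction. Together with $\emptyset$ being a flame, this proves the flames of $D$ form a greedoid. I expect the crux to be exactly the recognition that the maximal minimum cuts $X^*_{F,v}$ assemble into the preorder $\preceq_F$: the flame characterisation and the edge-addability criterion are routine once set up, but it is the short uncrossing step that makes the final in-degree count close.
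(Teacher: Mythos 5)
Your argument is correct. Since the present paper only \emph{cites} Theorem~\ref{thm: flame greedoid} from Joó's 2021 article rather than proving it, there is no in-paper proof to compare against line by line; but the route you take is a clean, self-contained one, and it is recognisably dual to the machinery the paper builds for the infinite setting. Your three pillars all check out: the characterisation ``$F$ is a flame iff $\lambda_F(r,v)=d^-_F(v)$ for all $v$'' is just Menger plus the observation that a path system realising $\lambda_F(r,v)$ many edge-disjoint $rv$-paths in a flame must hit every ingoing edge of $v$; the addability criterion ``$F\cup\{e\}$ is a flame iff $u\notin X^*_{F,v}$'' follows from the standard bookkeeping that augmenting fails exactly when some minimum cut side absorbs both endpoints of $e$, combined with $X^*_{F,v}$ being the \emph{maximal} such side; and the transitivity of $\preceq_F$ via submodular uncrossing (intersection still a $w$-side, hence union still a minimum $v$-side, hence contained in $X^*_{F,v}$ by maximality) is exactly right. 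The final counting argument at $X=X^*_{F,v_0}$ closes cleanly: downward closedness forces every $F'\setminus F$ edge with head in $X$ to have its tail in $X$, so $d^-_{F'}(X)\le d^-_F(X)$, while the flame identities at $v_0$ force the opposite strict inequality.

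What is worth pointing out is that your approach is intrinsically \emph{finite}: the submodularity uncrossing and the equality $d^-_F(X^*_{F,v})=\lambda_F(r,v)$ are cut-side facts that rely on Menger in its cardinality form. The paper replaces $X^*_{F,v}$ with the largest $v$-linked set $X_{v,D}$ (Corollary~\ref{cor: largest bubble}), defined via systems of edge-disjoint paths rather than via minimum cuts, precisely so that the key monotonicity $w\in X_{v,D}\Rightarrow X_{w,D}\subseteq X_{v,D}$ (your transitivity of $\preceq_F$) survives in infinite digraphs where cut cardinalities may be infinite and uncrossing is unavailable. In particular Lemma~\ref{lem: largness char}, which characterises largeness by ``all tails of non-$L$ edges into $v$ lie in $X_{v,D(L)}$'', is the infinite path-theoretic analogue of your ``$u\in X^*_{F,v}$'' membership test. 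So your proof and the paper's framework are two faces of the same idea — one via submodular cuts, one via $v$-linked sets and the Aharoni--Berger/Pym linkage theorems — and your version is the natural one for the finite statement you were asked to prove.
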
   
 A set $L\subseteq E$ is called \emph{large at} $v$ if $D(L)$ admits a set $\mathcal{P}_v$ of edge-disjoint $rv$-paths such that $\mathcal{P}_v$ 
 has a transversal that is an $rv$-cut\footnote{An $rv$-cut is an edge set $C$ such that every $rv$-path in $D$ meets $C$.} in $D$. Note that, if 
 $\lambda_D(r,v)$ is finite, then, by Menger's theorem, $L$ being large at $v$ is equivalent to $\lambda_{D(L)}(r,v)=\lambda_D(r,v)$. However, 
 in general, being large at $v$ is a stronger property than merely preserving the local edge-connectivity from $r$ to $v$. A set $L\subseteq E$ is 
\emph{large} if it is large at every $v\in V\setminus \{ r \}$. Lovász's theorem can be phrased in the following way:

\begin{theorem}[{Lovász, \cite[Theorem 2]{lovasz1973connectivity}}]\label{thm: lovasz orig}
 Every finite rooted digraph  admits a large flame. 
\end{theorem}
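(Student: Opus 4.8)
The plan is to pick an inclusion-minimal \emph{large} set $F\subseteq E$ (one exists because $E$ itself is large and $D$ is finite) and to show that the spanning subdigraph $D(F)$ is automatically a flame. Since $D$ is finite, Menger's theorem applies everywhere; in particular a set $L$ is large at $v$ precisely when $\lambda_{D(L)}(r,v)=\lambda_D(r,v)$, and I will use this form throughout. For a vertex set $S$ I write $\delta^{+}_{D(F)}(S)$ for the set of edges of $F$ with tail in $S$ and head outside $S$, and I use freely that $S\mapsto\left|\delta^{+}_{D(F)}(S)\right|$ is submodular.

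The first step is a reformulation: $D(F)$ is a flame if and only if $\lambda_{D(F)}(r,v)=\left|\delta_F(v)\right|$ for every non-root $v$, where $\delta_F(v)$ denotes the set of in-edges of $v$ in $F$. Indeed $\delta_F(v)$ is an $rv$-cut in $D(F)$, so $\lambda_{D(F)}(r,v)\le\left|\delta_F(v)\right|$ always holds; and when equality holds, any family of $\lambda_{D(F)}(r,v)$ edge-disjoint $rv$-paths in $D(F)$ meets this minimum cut exactly once per path, hence covers all of $\delta_F(v)$, giving $\delta_F(v)\in\mathcal G_{D(F)}(v)$. Ranging over all $v$, this is exactly the statement $F\in\mathcal G(D(F))$, i.e., that $D(F)$ is a flame.

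So suppose toward a contradiction that $D(F)$ is not a flame. By the reformulation and largeness of $F$ there is a non-root $v$ with $\left|\delta_F(v)\right|>\lambda_{D(F)}(r,v)=\lambda_D(r,v)$. A standard uncrossing shows that the vertex sets $S$ with $r\in S$, $v\notin S$ and $\left|\delta^{+}_{D(F)}(S)\right|=\lambda_D(r,v)$ are closed under union, so there is a largest one, call it $S_0$. Since $\left|\delta^{+}_{D(F)}(S_0)\right|=\lambda_D(r,v)<\left|\delta_F(v)\right|$ while every in-edge of $v$ with tail in $S_0$ belongs to $\delta^{+}_{D(F)}(S_0)$, some in-edge $e=(u,v)$ of $v$ in $F$ has $u\notin S_0$. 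I then claim that $F-e$ is still large; this contradicts the minimality of $F$ and completes the proof.

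It remains to establish the claim, and this is the crux. I would first note that for an edge $e$ of $F$ one has $\lambda_{D(F-e)}(r,w)<\lambda_{D(F)}(r,w)$ if and only if $e$ lies in some minimum $rw$-cut of $D(F)$ (for the forward direction, take $T$ to be the set of vertices reachable from $r$ in $D(F)$ after deleting a minimum $rw$-cut of $D(F-e)$). Hence, if $F-e$ is not large, there are a non-root $w$ and a vertex set $T$ with $r\in T$, $w\notin T$, $\left|\delta^{+}_{D(F)}(T)\right|=\lambda_D(r,w)$ and $e\in\delta^{+}_{D(F)}(T)$, which forces $u\in T$ and $v\notin T$. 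Now $S_0\cup T$ contains $r$ but not $v$, and $S_0\cap T$ contains $r$ but not $w$, so submodularity yields
\[ \lambda_D(r,v)+\lambda_D(r,w)=\left|\delta^{+}_{D(F)}(S_0)\right|+\left|\delta^{+}_{D(F)}(T)\right|\ge\left|\delta^{+}_{D(F)}(S_0\cup T)\right|+\left|\delta^{+}_{D(F)}(S_0\cap T)\right|\ge\lambda_D(r,v)+\lambda_D(r,w), \]
so equality holds throughout; in particular $S_0\cup T$ is a minimum $rv$-cut, whence $T\subseteq S_0$ by the maximality of $S_0$ — contradicting $u\in T\setminus S_0$. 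Thus $F-e$ is large, as claimed. I expect the main obstacle to be precisely this last step: the clean characterisation of when edge deletion lowers a local connectivity, and the uncrossing that plays the maximality of $S_0$ against $u\notin S_0$. The degenerate case $\lambda_D(r,v)=0$ causes no trouble and is covered by the same computation ($S_0$ is then the largest out-closed vertex set containing $r$ but not $v$).
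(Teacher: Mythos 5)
The paper cites Theorem \ref{thm: lovasz orig} from Lovász \cite{lovasz1973connectivity} without proving it, so there is no in-paper proof to compare against. Your argument---take an inclusion-minimal large $F$, reformulate the flame condition as $\lambda_{D(F)}(r,v)=\left|\delta_F(v)\right|$ for all non-root $v$, and use submodularity of the cut function together with the maximality of $S_0$ to find a redundant in-edge $e$ of $v$ that can be removed while preserving largeness---is correct in substance and is essentially the classical route to Lovász's theorem. One imprecision to fix: in the forward direction of your deletion lemma, $T$ should be the set of vertices reachable from $r$ in $D(F-e)$ after deleting the minimum $rw$-cut $C$ of $D(F-e)$ (equivalently, reachable from $r$ in $D(F)$ after deleting $C\cup\{e\}$), not merely after deleting $C$ from $D(F)$; with the latter definition $w$ could well lie in $T$, since $e$ might let $r$ jump across $C$ and then continue to $w$. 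After this correction $\delta^{+}_{D(F)}(T)=C\cup\{e\}$ is a minimum $rw$-cut of $D(F)$ containing $e$, as you intended, and the rest of the uncrossing against $S_0$ goes through unchanged. It is worth noting that your approach does not extend to the infinite setting the paper is concerned with, because an inclusion-minimal large set need not exist when $E$ is infinite and the submodular counting collapses for infinite local connectivities; that is exactly why the paper replaces this argument by the machinery of $v$-linked sets, fillability, and the largeness characterisation in Lemma \ref{lem: largness char}.
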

Observe that if $D$ is finite, then every large $L$ must include a base of the edge-connectivity gammoid $M_D$.  Lovász's theorem states that $M_D$ 
always admits a large base. Szeszlér showed recently that if the digraph is acyclic, then all the bases of $M_D$ are large:
\begin{theorem}[{Szeszlér, \cite[Corollary 1]{szeszler2025some}}]\label{thm: orig Szesz}
 If  $D=(V,E)$ is  a finite acyclic $r$-rooted digraph, then every maximal element of $\mathcal{G}(D)$ is large. 
\end{theorem}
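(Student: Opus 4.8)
The plan is to reduce the statement to a claim about bases of the edge-connectivity gammoid $M_D$, prove that by induction on the number of edges outside the base, and isolate one lemma where acyclicity is essential.

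Since $M_D=(E,\mathcal{G}(D))$ is a matroid, its maximal elements are its bases, and $L\subseteq E$ is a base iff $L\cap\delta_D(v)$ is a base of $M_{v,D}$ for each $v\in V\setminus\{r\}$; in particular $|L\cap\delta_D(v)|=\lambda_D(r,v)=|\delta_{D(L)}(v)|$. For a finite digraph, ``$L$ is large at $v$'' means $\lambda_{D(L)}(r,v)=\lambda_D(r,v)$ by Menger's theorem, and since $v$ already has $\lambda_D(r,v)$ ingoing edges in $D(L)$ this is the same as $D(L)$ being a flame at $v$. So it suffices to show: if $L$ is a base of $M_D$, then $\lambda_{D(L)}(r,v)=\lambda_D(r,v)$ for all $v\in V\setminus\{r\}$.

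I would prove this by induction on $|E\setminus L|$; if $E\setminus L=\emptyset$ then $D(L)=D$ and there is nothing to prove. The inductive step rests on the following lemma, which is where acyclicity enters: \emph{if $D$ is acyclic and $L$ is a base of $M_D$ with $L\ne E$, then some $e\in E\setminus L$ has the property that $L$ is still a base of $M_{D-e}$}. Granting this, pick such an $e$: then $L\subseteq E(D-e)$, $|E(D-e)\setminus L|<|E\setminus L|$, and $L$ being a base of $M_{D-e}$ forces $\lambda_{D-e}(r,v)=|L\cap\delta_{D-e}(v)|=|L\cap\delta_D(v)|=\lambda_D(r,v)$ for every $v$; applying the induction hypothesis to the acyclic digraph $D-e$ then yields $\lambda_{D(L)}(r,v)=\lambda_{D-e}(r,v)=\lambda_D(r,v)$, finishing the step.

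The heart of the argument is thus the lemma, and I expect it to be the main obstacle. Write $B_v:=L\cap\delta_D(v)$. ``$L$ is a base of $M_{D-e}$'' amounts to: for every $v$, some family of $\lambda_D(r,v)$ edge-disjoint $rv$-paths covering $B_v$ avoids $e$ (maximality is then automatic). I would choose $e=(x,y)\in E\setminus L$ with head $y$ topologically minimal among heads of edges in $E\setminus L$. Acyclicity then gives: an $rv$-path can use $e$ only when $v=y$ or $v$ is a proper descendant of $y$, so only such $v$ matter; and every edge of $D$ whose head strictly precedes $y$ lies in $L$, so any family of edge-disjoint paths from $r$ to vertices all preceding $y$ automatically lies inside $D(L)$. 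Applied to the tails of $B_y$, the latter shows that $B_y$ is already realized inside $D(L)$, hence avoids $e$. For a proper descendant $v$ of $y$, I would start from a realization of $B_v$ in $D$, cut at $y$ those of its paths that pass through $y$ (there are at most $\lambda_D(r,y)$ of them), and replace the resulting integral flow into $y$ — which may pass through $e$ — by one of the same value avoiding $e$ in the digraph obtained by deleting the fixed ``tails from $y$ to $v$'' and the paths avoiding $y$; re-attaching the tails then produces a realization of $B_v$ avoiding $e$. Carrying out this last replacement — arguing on the residual graph and using both the minimality of $y$ and the fact that $B_y$ is realizable inside $L$ to push the flow off $e$ without breaking global edge-disjointness — is the technical core. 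Finally I would note that acyclicity is indispensable here: already for a small digraph containing a directed $2$-cycle reachable from $r$ only through a single edge, there is a base of $M_D$ that is not large, and no edge outside it can be deleted while keeping it a base, so the lemma — and with it the theorem — fails once cycles are allowed.
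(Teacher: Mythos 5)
Your proposal does not follow the paper's route. The paper does not reprove Szeszlér's finite theorem directly; it cites it and instead proves the infinite generalisation (Theorem \ref{thm: main Szesz}) via $v$-linked sets, the fillability lemma, and the largeness characterisation (Lemma \ref{lem: largness char}), with the key step being an exchange at the $<$-smallest ``bad'' vertex $v$ rather than any induction on graph size. The paper also records that Szeszlér's own proof is by induction on $|V|$, repeatedly deleting a sink. Your proposal is a third route: induction on $|E\setminus L|$, driven by a one-edge-deletion lemma and a topologically \emph{minimal} head (rather than a sink, i.e.\ a topologically maximal vertex). The reduction to the lemma and the easy cases are correct: once $e=(x,y)$ with $y$ $<$-minimal among heads of $E\setminus L$ is chosen, vertices $w$ with $w\le y$ and vertices that are not descendants of $y$ are handled cleanly, exactly as you argue.

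The genuine gap is the descendant case, which you yourself flag as ``the technical core'' and do not carry out. The assertion you need there is: if $\mathcal{R}$ realises $B_v$, $\mathcal{R}_{\bar y}$ are its members avoiding $y$, and $\mathcal{H}$ are the initial segments up to $y$ of the remaining members, then in $D - e - \bigcup\mathcal{R}_{\bar y}$ there still exist $|\mathcal{H}|$ edge-disjoint $ry$-paths. This is not a routine augmenting-path step: $\mathcal{S}$ (the $L$-realisation of $B_y$) may share edges with $\mathcal{R}_{\bar y}$, so you cannot simply substitute paths of $\mathcal{S}$ for paths of $\mathcal{H}$, and the obvious cut-counting (comparing $|\delta_D(X)|\ge|B_y|\ge|\mathcal{H}|$ with the alleged small cut in $D-e-\bigcup\mathcal{R}_{\bar y}$) does not close because $\mathcal{R}_{\bar y}$ may contribute many edges to $\delta_D(X)$. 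The claim is in fact \emph{true} — it is an immediate consequence of the theorem you are trying to prove (if $L$ is large then every $B_v$ has a realisation inside $D(L)\subseteq D-e$), which is precisely why no counterexample exists; but for the same reason your key lemma is not a genuine weakening of the theorem, and proving it must encode essentially the same difficulty. As written, the proposal repackages the hard step rather than resolving it, so it does not constitute a proof. To salvage the plan you would need a concrete Menger/submodularity argument here — for instance along the lines of the paper's Lemma \ref{lem: superfillable} (the ``moreover'' part, which performs exactly this kind of one-extra-edge rerouting via a residual-digraph argument), or a linking/exchange step as in Theorem \ref{thm: Pym}.
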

\noindent The following example shows that Szeszlér's theorem may fail for infinite digraphs:
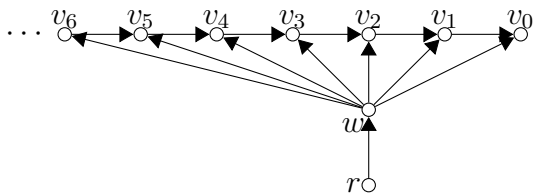
\begin{figure}[H]
\centering

\begin{tikzpicture}
\node[circle,inner sep=0pt,draw,minimum size=5]  (v9) at (0,0) {};
\node[circle,inner sep=0pt,draw,minimum size=5]  (v8) at (0,1) {};

\node[circle,inner sep=0pt,draw,minimum size=5]  (v7) at (2,2) {};
\node[circle,inner sep=0pt,draw,minimum size=5]  (v6) at (1,2) {};
\node[circle,inner sep=0pt,draw,minimum size=5]  (v5) at (0,2) {};
\node[circle,inner sep=0pt,draw,minimum size=5]  (v4) at (-1,2) {};
\node[circle,inner sep=0pt,draw,minimum size=5]  (v3) at (-2,2) {};
\node[circle,inner sep=0pt,draw,minimum size=5]  (v2) at (-3,2) {};
\node[circle,inner sep=0pt,draw,minimum size=5]  (v1) at (-4,2) {};
\node  at (-4.5,2) {$\cdots$};

\draw[-triangle 60]  (v1) edge (v2);
\draw[-triangle 60]  (v2) edge (v3);
\draw[-triangle 60]  (v3) edge (v4);
\draw[-triangle 60]  (v4) edge (v5);
\draw[-triangle 60]  (v5) edge (v6);
\draw[-triangle 60]  (v6) edge (v7);
\draw[-triangle 60]  (v8) edge (v7);
\draw[-triangle 60]  (v8) edge (v6);
\draw[-triangle 60]  (v8) edge (v5);
\draw[-triangle 60]  (v8) edge (v4);
\draw[-triangle 60]  (v8) edge (v3);
\draw[-triangle 60]  (v8) edge (v2);
\draw[-triangle 60]  (v8) edge (v1);
\draw[-triangle 60]  (v9) edge (v8);
\node at (-0.2,0) {$r$};
\node at (-0.2,0.8) {$w$};
\node at (2,2.2) {$v_0$};
\node at (1,2.2) {$v_1$};
\node at (0,2.2) {$v_2$};
\node at (-1,2.2) {$v_3$};
\node at (-2,2.2) {$v_4$};
\node at (-3,2.2) {$v_5$};
\node at (-4,2.2) {$v_6$};
\end{tikzpicture}
\caption{The horizontal edges together with the edge from  $r$ to $w$ form a maximal element $B$ of $\mathcal{G}(D)$. Clearly, $B$ is not large because 
$\lambda_D(r,v_n)=1$ but $\lambda_{D(B)}(r,v_n)=0$ for each $n\in \mathbb{N}$.} \label{fig: backinf}
\end{figure}
\noindent We say that an infinite path is \emph{backward-infinite} if it has a final vertex but no starting vertex (like the path consisting of the 
horizontal edges in Figure \ref{fig: backinf}). The first result of this paper states that such paths are the only additional obstacles corresponding to 
Szeszlér's theorem:  
  
\begin{restatable*}{theorem}{infSzesz}\label{thm: main Szesz}
 If  $D=(V,E)$ is  an  acyclic $r$-rooted digraph without backward-infinite paths, then every maximal element of $\mathcal{G}(D)$ is  large.
\end{restatable*}
\noindent Szeszlér's proof of Theorem \ref{thm: orig Szesz} applies induction on the number of vertices and uses the fact that a finite acyclic 
digraph has a sink. The latter fails for infinite digraphs, for example, consider a forward-infinite path. Moreover, induction on the number of vertices 
is not applicable if the vertex set is infinite. Therefore, proving Theorem \ref{thm: main Szesz} requires completely new tools compared to 
Szeszlér's approach. 
 
It is possible to define the vertex version of flames and largness, where internally vertex-disjoint paths are considered instead of edge-disjoint ones. 
There are several positive results about the existence of large flames in infinite digraphs for the vertex version (see \cite{joo2019vertex, 
erde2021enlarging, jacobs2023lovasz, gut2024large}), but none had been established previously for the edge version.
The second result of the present paper establishes the existence of large flames in acyclic digraphs without backward-infinite paths:

\begin{restatable*}{theorem}{largeflame}\label{thm: main large flame acyc}
 In every $r$-rooted acyclic digraph $D=(V,E)$ that does not contain backward-infinite paths,  every flame of $D$ extends to a large flame of $D$. 
\end{restatable*}
\noindent Since the empty set is trivially a flame in $D=(V,E)$, Theorem \ref{thm: main large flame acyc} in particular guarantees the existence of 
a large flame.

\section{Proof of the main results}
We will use the following theorems:
\begin{theorem}[Aharoni and Berger, \cite{aharoni2009menger}]\label{thm: gen Menger}
 If $D=(V,E)$ is a (possibly infinite) digraph and $s,t\in V$ are distinct, then there is a set $\mathcal{P}$ of edge-disjoint $st$-paths and an $st$-cut 
 $C$ such that 
 $C$ is a transversal of $\mathcal{P}$ (i.e. $C$ contains exactly one edge from each path $P\in \mathcal{P}$). 
\end{theorem}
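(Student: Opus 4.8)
The plan is to derive the stated (edge) form from the \emph{vertex} form of the same theorem — the resolution of the Erdős-Menger conjecture, which is the substantive content of \cite{aharoni2009menger} — via the standard line-digraph reduction. Concretely, I would take on faith the vertex version: for every digraph $H$ and all $A,B\subseteq V(H)$ there are a family $\mathcal{Q}$ of vertex-disjoint $A$-$B$ paths (trivial paths, i.e.\ single vertices of $A\cap B$, allowed) together with an $A$-$B$ separator $S\subseteq V(H)$ that is a transversal of $\mathcal{Q}$ (so $S\subseteq\bigcup_{Q\in\mathcal{Q}}V(Q)$ and $|S\cap V(Q)|=1$ for each $Q$). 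Everything below is an application of this.

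First I would form the line digraph $L(D)$: its vertex set is $E$, with an arc from $e$ to $f$ whenever the head of $e$ is the tail of $f$. Set $A:=\{e\in E:\ e\text{ leaves }s\}$ and $B:=\{e\in E:\ e\text{ enters }t\}$; since $D$ is loopless and $s\ne t$, $A\cap B$ is exactly the set of $s\to t$ edges. The key dictionary is: (i) the edge sequence of any $st$-path of $D$ is an $A$-$B$ path of $L(D)$ — a trivial one when the path has length $1$ — because in a path only the first vertex is $s$ and only the last is $t$; hence any vertex set $S\subseteq E$ separating $A$ from $B$ in $L(D)$ meets every $st$-path of $D$, i.e.\ is an $st$-cut of $D$. (ii) Conversely, every $A$-$B$ path $Q$ of $L(D)$ spells an $st$-\emph{walk} $W_Q$ in $D$ whose edge set is exactly $V(Q)$, and members of a vertex-disjoint family give edge-disjoint walks.

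Then I would apply the vertex theorem to $(L(D),A,B)$, obtain $\mathcal{Q}$ and $S$, and put $C:=S$, which by (i) is an $st$-cut of $D$. For $Q\in\mathcal{Q}$ let $c_Q$ be the unique edge of $S$ in $V(Q)$; the $c_Q$ are pairwise distinct and exhaust $S$. It remains to choose, for each $Q$, an $st$-path $P_Q\subseteq V(Q)$ with $c_Q\in P_Q$: then the $P_Q$ are edge-disjoint (the $V(Q)$ are), $S\cap P_Q=\{c_Q\}$, and $\mathcal{P}:=\{P_Q\}$ together with $C$ is exactly what the theorem demands. To obtain $P_Q$: split $W_Q$ at its unique passage through $c_Q=(u,v)$ as $W_Q=W_1\cdot c_Q\cdot W_2$, take an $su$-path $P_1$ among the edges of $W_1$ and a $vt$-path $P_2$ among those of $W_2$. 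If $P_1\cup\{c_Q\}\cup P_2$ is already a path we are done; otherwise $P_1$ and $P_2$ share a vertex, and short-cutting there produces an $st$-path inside $V(Q)\setminus\{c_Q\}$ — which is impossible, since it would avoid the $st$-cut $S$. Hence the first alternative always holds, and the reduction is complete. (The degenerate instance $\lambda_D(s,t)=0$ falls out as $\mathcal{Q}=S=\varnothing$, so $\mathcal{P}=C=\varnothing$.)

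The genuine obstacle is the single ingredient I would take for granted: the vertex version \emph{is} the Aharoni-Berger theorem, whose proof runs through their theory of waves and hindrances and is not something one sketches. A self-contained alternative would be to rebuild that machinery directly for edges — edge-disjoint ``waves'' from $s$ toward $t$, a $\subseteq$-maximal wave extracted by a Zorn-type argument, and the hindrance dichotomy — which is in effect a transcription of \cite{aharoni2009menger}; the point of the line-digraph reduction above is precisely that this need not be redone.
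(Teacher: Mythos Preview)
The paper does not prove this theorem at all: it is quoted as a black box from \cite{aharoni2009menger} and used as an input to the later lemmas. So there is no ``paper's own proof'' to compare against; your proposal supplies strictly more than the paper does.

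That said, your reduction is correct and is the standard way to derive the edge statement from the vertex statement of the Erd\H{o}s--Menger conjecture. The line-digraph dictionary is set up properly, the identification of an $A$--$B$ separator in $L(D)$ with an $st$-cut in $D$ is fine, and the only delicate step---extracting from the walk $W_Q$ an $st$-path through $c_Q$---is handled cleanly by your contradiction: any short-cut would yield an $st$-walk, hence an $st$-path, inside $V(Q)\setminus\{c_Q\}\subseteq E\setminus S$, violating that $S$ is an $st$-cut. Your own assessment of the limitation is accurate: the entire depth of the result lies in the vertex theorem you invoke, and that is exactly what \cite{aharoni2009menger} establishes.
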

For a set $\mathcal{P}$ of edge-disjoint finite paths, let $\mathsf{in}(\mathcal{P})$ and $\mathsf{ter}(\mathcal{P})$ denote the sets of initial and terminal edges, respectively, of the paths in $\mathcal{P}$ .
\begin{theorem}[Diestel and Thomassen, \cite{diestel2006cantor}]\label{thm: Pym}
 Assume that $D=(V,E)$ is a (possibly infinite) digraph,  $s,t\in V$ are distinct, and $\mathcal{P}$ and $\mathcal{Q}$ are sets of edge-disjoint 
 $st$-paths in $D$. Then there exists a set $\mathcal{R}$ of $st$-paths in $D$ such that 
 $\mathsf{in}(\mathcal{P})\subseteq\mathsf{in}(\mathcal{R})$ and $\mathsf{ter}(\mathcal{Q})\subseteq \mathsf{ter}(\mathcal{R})$.
\end{theorem}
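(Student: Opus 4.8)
The plan is to reduce the statement, when $D$ is finite, to the existence of a $\{0,1\}$-valued $st$-flow with prescribed saturated edges, to produce that flow via Hoffman's feasible-circulation theorem, and to treat infinite $D$ separately by a transfinite, Cantor--Bernstein-type construction in the spirit of Diestel and Thomassen. Write $S:=\mathsf{in}(\mathcal P)\cup\mathsf{ter}(\mathcal Q)$. No $st$-path uses an edge entering $s$ or leaving $t$, so we may delete all such edges at the outset. For finite $D$ it then suffices to find an integral, $\{0,1\}$-valued $st$-flow $\varphi$ on $D$ with $\varphi(e)=1$ for every $e\in S$: decomposing $\varphi$ into finitely many $st$-paths and directed cycles and discarding the cycles yields pairwise edge-disjoint $st$-paths $\mathcal R$; every cycle avoids $s$ and $t$, hence avoids $S$, so each edge of $\mathsf{in}(\mathcal P)$ remains the first edge of some path in $\mathcal R$ and each edge of $\mathsf{ter}(\mathcal Q)$ the last edge of some path, i.e.\ $\mathsf{in}(\mathcal P)\subseteq\mathsf{in}(\mathcal R)$ and $\mathsf{ter}(\mathcal Q)\subseteq\mathsf{ter}(\mathcal R)$.

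To obtain such a $\varphi$ when $D$ is finite, adjoin a return edge $a=(t,s)$, put lower and upper bound $1$ on every edge of $S$, bounds $0$ and $1$ on every other edge of $D$, leave $a$ unconstrained, and invoke Hoffman's feasible-circulation theorem; a $\subseteq$-minimal feasible integral circulation restricts, after deleting $a$, to the desired $\varphi$ (minimality destroys any cycle through an edge of $S$). Hoffman's criterion reduces to checking, for every $U\subseteq V$ with $t\notin U$ or $s\in U$ (so that $a$ does not leave $U$), that the number of edges of $S$ entering $U$ does not exceed the number of edges of $D$ leaving $U$; this follows by tracing the paths of $\mathcal P$ (when $s\notin U$) or of $\mathcal Q$ (when $s\in U$) that are forced to cross out of $U$, using edge-disjointness within the relevant family.

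For infinite $D$, Hoffman's theorem is unavailable and a direct compactness argument does not seem to apply, so I would construct $\mathcal R$ by transfinite recursion instead. Well-order $S$ and maintain pairwise edge-disjoint families $\mathcal R_\gamma$ of $st$-paths, each contained in $\bigcup(\mathcal P\cup\mathcal Q)$ and \emph{serving} every element of $S$ considered before stage $\gamma$ --- an edge of $\mathsf{in}(\mathcal P)$ is served if some path of $\mathcal R_\gamma$ begins with it, an edge of $\mathsf{ter}(\mathcal Q)$ if some path ends with it --- beginning with $\mathcal R_0:=\mathcal P$, which serves all of $\mathsf{in}(\mathcal P)$. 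To serve a new, as yet unserved $f=\mathsf{ter}(Q)$: if $Q$ is edge-disjoint from the current family, simply add it; otherwise follow $Q$ backward from $t$ to the first edge lying on some path $R$ of the family, and replace $R$ by the path that runs along $R$ from $s$ up to that vertex and then along $Q$ on to $t$, suppressing any repeated vertices. This serves $f$, keeps $\mathsf{in}(R)$ served, and stays inside $\bigcup(\mathcal P\cup\mathcal Q)$; the mirror-image move, following the relevant path of $\mathcal P$ forward from $s$, re-serves any edge of $\mathsf{in}(\mathcal P)$ that a replacement may have spoiled.

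The real difficulty --- the technical heart of Diestel and Thomassen's argument --- is to verify that this recursion is well defined: a replacement spoils at most one further demand (namely $\mathsf{ter}(R)$), so one must show that the resulting chains of re-servings terminate and never cycle, and that at limit stages the usage of each edge has stabilised, so that a genuine limit family exists and still serves every earlier demand. It is precisely here that the finiteness of all paths in $\mathcal P$ and $\mathcal Q$ is indispensable: every path ever placed into $\mathcal R$ is a finite concatenation of finite subpaths of such paths. By contrast, forward-infinite directed paths of $D$ never enter the construction, which is why the theorem holds with no acyclicity or finiteness hypothesis on $D$.
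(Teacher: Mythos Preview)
The paper does not prove this theorem at all: Theorem~\ref{thm: Pym} is quoted with a citation to Diestel and Thomassen and then used as a black box (once, in the proof of Proposition~\ref{claim: vlinked covers}). There is therefore no ``paper's own proof'' to compare your attempt against.

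On the content of your sketch: the finite case via Hoffman's circulation theorem is a clean and correct reduction. After deleting edges into $s$ and out of $t$, no directed cycle can meet $s$ or $t$, so edges of $S=\mathsf{in}(\mathcal P)\cup\mathsf{ter}(\mathcal Q)$ never lie on a cycle of the decomposition, and your verification of the Hoffman cut condition by routing along $\mathcal P$ or $\mathcal Q$ according to whether $s\in U$ is sound.

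For the infinite case, however, what you have written is not a proof but an outline of where the proof would have to go, and you say so yourself. The transfinite ``serve one more edge of $\mathsf{ter}(\mathcal Q)$, then repair the damage'' scheme is the right picture, but the two substantive obligations --- that the repair chains triggered by a single replacement terminate, and that at limit ordinals the edge usage has stabilised so that a limit family exists and still serves all earlier demands --- are exactly the content of the Diestel--Thomassen paper, and you have not supplied them. Since the present paper treats the theorem as a citation, this is not a defect relative to the paper; but as a self-contained proof of the statement, the infinite half is incomplete.
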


For a vertex set $X$, let $\delta_D(X)$ be the set of edges entering $X$. Fix an $r$-rooted (possibly infinite) digraph $D=(V,E)$. A set  $X\subseteq V\setminus \{ r \} $ is $v$\emph{-linked} if $v\in 
X$ and there exists a set $\{ P_e:\  e\in \delta_D(X) \}$ of 
edge-disjoint paths, where path $P_e$ has $e$ as its initial edge and $v$ as its terminal vertex.

\begin{lemma}\label{lem: two bubble union}
 If $X$ is $v$-linked, $w\in X$, and $Y$ is $w$-linked, then $X\cup Y$ is $v$-linked. 
\end{lemma}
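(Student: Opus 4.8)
\emph{Proof plan.} Fix systems $\mathcal P=\{P_e:e\in\delta_D(X)\}$ and $\mathcal P'=\{P'_f:f\in\delta_D(Y)\}$ witnessing that $X$ is $v$-linked and $Y$ is $w$-linked. The first thing I would record is that, by edge-disjointness, no $P_e$ can use an edge of $\delta_D(X)$ besides $e$ (it would otherwise be shared with another path of $\mathcal P$), so each $P_e$ stays inside $X$ after its initial edge; likewise each $P'_f$ stays inside $Y$. Since $v\in X\subseteq X\cup Y$, the task reduces to constructing a set $\{Q_g:g\in\delta_D(X\cup Y)\}$ of edge-disjoint paths with $Q_g$ starting with $g$ and ending at $v$, and by the same observation such $Q_g$ may be taken inside $X\cup Y$. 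I would phrase this in the digraph $D'$ obtained from the subdigraph induced on $X\cup Y$ by adding a new source $s$ with an edge from $s$ to the head of $g$ for each $g\in\delta_D(X\cup Y)$: the goal becomes a set of edge-disjoint $sv$-paths in $D'$ using \emph{every} edge leaving $s$. It is also helpful to split $\delta_D(X\cup Y)=A\sqcup B$ into those edges with head in $X$ (so $A\subseteq\delta_D(X)$) and those with head in $Y\setminus X$ (so $B\subseteq\delta_D(Y)$).

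The mechanism is cleanest in the finite case, and I would use that as a guide. There, a one-line edge-disjointness injection shows that a set $S$ with $p\in S$ is $p$-linked if and only if $|\delta_D(U)|\geq|\delta_D(S)|$ for every $U$ with $p\in U\subseteq S$ (the ``if'' direction being Menger's theorem in the super-source digraph of $S$, built exactly as $D'$ above). Granting this, $X\cup Y$ is $v$-linked by the following uncrossing: given $U$ with $v\in U\subseteq X\cup Y$, submodularity of $|\delta_D(\cdot)|$ applied to $U$ and $X$ together with $v\in U\cap X\subseteq X$ gives $|\delta_D(U)|\geq|\delta_D(U\cup X)|$; then submodularity applied to $U\cup X$ and $Y$, together with $(U\cup X)\cup Y=X\cup Y$ and $w\in (U\cup X)\cap Y\subseteq Y$, gives $|\delta_D(U\cup X)|\geq|\delta_D(X\cup Y)|$; chaining these yields $|\delta_D(U)|\geq|\delta_D(X\cup Y)|$. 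Note that it is exactly the hypothesis $w\in X$ that makes the second application legitimate.

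The plan for the general case is to perform this same uncrossing constructively, since the cardinal cancellations hidden in the finite computation (``$a+c\geq b+c\Rightarrow a\geq b$'') are not valid when the quantities are infinite. Concretely: route each $g\in A$ to $v$ along $\mathcal P$ (inside $X$) and each $g\in B$ to $w$ along $\mathcal P'$ (inside $Y$), and then use Theorems~\ref{thm: gen Menger} and~\ref{thm: Pym} to prolong the $B$-paths from $w$ to $v$ and repair the collisions the two systems create in the overlap $X\cap Y$, all while preserving edge-disjointness; Theorem~\ref{thm: Pym} is the tool that splices the initial edges supplied by one system to the terminal edges supplied by another. I expect the main obstacle to be exactly this constructive uncrossing --- showing that no bottleneck around $w$ obstructs the prolongation --- which is where the location $w\in X$ has to be exploited, mirroring the role it played in the finite argument.
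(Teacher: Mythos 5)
Your finite‑case argument via submodular uncrossing is correct (and is a genuinely different route from the paper), and your opening observation that each $P_e$ stays inside $X$ after its initial edge, and each $P'_f$ stays inside $Y$, is both true and useful. However, your plan for the general case has a real gap, and the gap stems from choosing the wrong splicing point.

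You propose to route each $g\in B$ all the way to $w$ along $\mathcal P'$ and then ``prolong the $B$-paths from $w$ to $v$.'' This prolongation step is exactly where the argument breaks down: there is no supply of edge-disjoint $wv$-paths available to you (being $v$-linked does not give you paths starting at $w$, only paths starting at edges of $\delta_D(X)$), so you would need Menger/Pym to manufacture and reconcile them, and it is not at all clear the resulting bottleneck analysis goes through. The fix is to splice \emph{earlier}, at the moment a $B$-path first crosses into $X$ rather than at $w$. Concretely: for $g\in B$, the path $P'_g$ runs from $g$ to $w\in X$, so it must cross $\delta_D(X)$; let $e$ be its first edge in $\delta_D(X)$, take the initial segment $Q'_g$ of $P'_g$ up to $e$, and set $R_g:=Q'_g\cup P_e$. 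Since the witness for $X$ being $v$-linked already provides, for \emph{every} $e\in\delta_D(X)$, a path $P_e$ from $e$ to $v$, the continuation to $v$ is free; there is no bottleneck to analyze, no use of Theorem~\ref{thm: gen Menger} or~\ref{thm: Pym}, and edge-disjointness of the resulting system $\{R_g:g\in\delta_D(X\cup Y)\}$ (taking $R_g:=P_g$ for $g\in A$) follows routinely from the edge-disjointness of $\mathcal P$ and $\mathcal P'$ together with your containment observation. The hypothesis $w\in X$ is what guarantees $P'_g$ crosses $\delta_D(X)$ at all, which is the only place it enters. So: the direction (splice the $Y$-system into the $X$-system) is right, but the paper's proof splices at the first $\delta_D(X)$-crossing rather than at $w$, which is what makes the construction immediate.
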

\begin{proof}
    Let $\{P_e:\  e\in \delta_D(X)\}$ and $\{Q_f:\  f\in \delta_D(Y)\}$ witness that $X$ is $v$-linked and $Y$ is  $w$-linked, respectively. We construct a path system $\{R_g:\  g\in \delta_D(X\cup Y)\}$ to witness that $X\cup Y$ is also $v$-linked. For any $g\in \delta_D(X\cup Y) \cap  \delta_D(X) $, set $R_g:= P_g$. If $g\in \delta_D(X\cup Y) \setminus  \delta_D(X) $, then $g\in \delta_D(Y) $. Since the path $Q_g$ terminates at $w\in X$, it must enter $X$. Let $e$ be the first edge of $Q_g$ in $\delta_D(X)$, and let $Q_g'$ be the initial segment of $Q_g$ up to $e$. We then define $R_g:=Q'_g \cup P_e$. It is routine to check that $\{R_g:\  g\in \delta_D(X\cup Y)\}$ is as desired.
\end{proof}

\begin{lemma}\label{lem: chain bubble union}
 If $\kappa$ is an infinite cardinal and $\left\langle X_\alpha:\ \kappa \right\rangle $ is an $\subseteq$-increasing continuous sequence of 
 $v$-linked sets, then $\bigcup_{\alpha<\kappa}X_\alpha$ is $v$-linked.
\end{lemma}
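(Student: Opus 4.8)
The plan is to unpack the definition of $v$-linkedness and then to construct the witnessing path system for $X:=\bigcup_{\alpha<\kappa}X_\alpha$ by a transfinite recursion along the chain, using Theorems~\ref{thm: gen Menger} and~\ref{thm: Pym} to keep the partial constructions coherent. A few normalisations come first. We may assume $\kappa$ is a limit ordinal, since if $\kappa=\mu+1$ then $X=X_\mu$, which is $v$-linked by hypothesis. Next, in any $v$-linked set $Y$ a witnessing family $\{P_e:e\in\delta_D(Y)\}$ can be taken so that each $P_e$ never leaves $Y$ after its initial edge $e$: the paths are pairwise edge-disjoint and each $P_f$ uses $f$, so no $P_e$ can traverse an $f\in\delta_D(Y)\setminus\{e\}$, and therefore $P_e$ cannot re-enter $Y$ once it has left the tail of $e$. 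Two consequences: first, the restriction of a witness to any subfamily indexed by $F\subseteq\delta_D(Y)$ is a witness that the edges of $F$ can be joined to $v$ by edge-disjoint paths \emph{inside the subdigraph of $D$ induced by} $Y$; second, since $v\in X_0\subseteq X$, $r\notin X$, and $g\in\delta_D(X)$ forces $g\in\delta_D(X_\alpha)$ for every $\alpha$ past the stage at which the head of $g$ enters $X$, the task reduces to producing a pairwise edge-disjoint family $\{P_g:g\in\delta_D(X)\}$ with $P_g$ a path from $g$ to $v$ lying in the subdigraph of $D$ induced by $X$.

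I would attack this by fixing, for each $\alpha<\kappa$, a witness $\mathcal R^\alpha$ that $X_\alpha$ is $v$-linked --- routing \emph{all} of $\delta_D(X_\alpha)$, including the ``transient'' edges whose tails later enter $X$, which cannot be discarded since it is precisely the ability to route them that rules out pathological configurations --- and then modifying the $\mathcal R^\alpha$ recursively so that their restrictions to the paths starting in $\delta_D(X_\alpha)\cap\delta_D(X)$ converge as $\alpha\to\kappa$. For the modification, note that ``$X_\alpha$ is $v$-linked'' is an $s$--$v$ linkage statement in the auxiliary digraph obtained from the subdigraph induced by $X_\alpha$ by adjoining a new source $s$ and, for each $e\in\delta_D(X_\alpha)$, a private two-edge path from $s$ to the head of $e$; in this language one wants an edge-disjoint family of $s$--$v$ paths whose initial edges are \emph{all} the edges leaving $s$. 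At a successor step one has such a family for $X_\alpha$ and a fresh one for $X_{\alpha+1}$, and Theorem~\ref{thm: Pym} lets one combine them into a single family inheriting the initial edges of the latter and the terminal edges of the former; Theorem~\ref{thm: gen Menger} is used to fix an Erd\H{o}s--Menger cut for the flow, against which the construction can be stabilised. Passing to the limit at $\kappa$ would then yield the required family $\{P_g:g\in\delta_D(X)\}$, hence that $X$ is $v$-linked.

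The step I expect to be the real difficulty is the limit step of the recursion: a priori nothing prevents the successive partial families from oscillating, and the available tools control only the endpoints of paths, not the paths themselves, so forcing convergence through limit ordinals requires a genuinely new idea --- this is exactly where Szeszl\'er's finite induction on vertices has no analogue, as emphasised in the introduction. (When $\delta_D(X)$ is finite the issue evaporates: the increasing chain $\delta_D(X_\alpha)\cap\delta_D(X)$ then stabilises at some $\alpha$, and restricting $\mathcal R^\alpha$ already gives the witness. Also Lemma~\ref{lem: two bubble union} does not apply directly here, since it glues sets linked to \emph{different} target vertices, whereas all of the $X_\alpha$ are linked to the same $v$.)
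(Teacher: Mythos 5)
Your normalisations are fine, and you have correctly located the crux: after fixing an arbitrary witness $\mathcal R^\alpha$ for each $\alpha$ and trying to massage them with Theorems~\ref{thm: gen Menger} and~\ref{thm: Pym}, nothing forces the partial families to stabilise along a limit of limits, and you explicitly concede that you have no mechanism for this. That concession is accurate, and it is a genuine gap: the Aharoni--Berger and Diestel--Thomassen results let you prescribe initial and terminal edges of a linkage, but give no control over the interior of the paths, which is precisely what a limit step needs.

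The idea you are missing is to \emph{not} fix witnesses in advance and repair them, but to build the witnesses $\mathcal P^\alpha = \{P^\alpha_e : e\in\delta_D(X_\alpha)\}$ by a single recursion that maintains a backward-coherence invariant: for every $\beta<\alpha$, each $P^\alpha_e$ meets $\delta_D(X_\beta)$ in exactly one edge $f$, and the terminal segment of $P^\alpha_e$ from $f$ onward is literally $P^\beta_f$. This invariant is created at successor steps by a purely combinatorial splice, with no appeal to Menger or Pym: take an arbitrary fresh witness $\mathcal Q^{\beta+1}$ for $X_{\beta+1}$, truncate each $Q^{\beta+1}_f$ at its first edge $e\in\delta_D(X_\beta)$, and continue with $P^\beta_e$. (That each $P^\beta_e$ meets $\delta_D(X_\beta)$ only in $e$ is automatic from edge-disjointness of $\mathcal P^\beta$, which is essentially your second normalisation.) Once the invariant is in place, oscillation is impossible by fiat: for a fixed edge $e$ the path $P^\gamma_e$ is constant in $\gamma$ from the first stage at which $e\in\delta_D(X_\gamma)$, so at a limit ordinal (including $\kappa$ itself) the family is determined pointwise and edge-disjointness is inherited from any sufficiently late common stage. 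So the correct architecture reverses yours --- coherence must be built in at successor steps so that limits come for free --- and the tools you reached for are not the right ones here.
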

\begin{proof}
Set $X_\kappa:=\bigcup_{\alpha<\kappa}X_\alpha$. For each $\alpha \leq\kappa$, we construct by transfinite recursion a path system 
$\mathcal{P}^{\alpha}=\{ P^{\alpha}_e:\ e\in \delta_D(X_\alpha) 
\}$ witnessing that $X_\alpha$ is $v$-linked. We maintain the additional property that for every $\beta<\alpha$, each   $P_e^{\alpha}\in \mathcal{P}^{\alpha}$ has exactly one edge $f$ in 
$ \delta_D(X_\beta)$, and the terminal segment of $P_e^{\alpha}$ starting from $f$ is $P^{\beta}_f$. 

The set $X_0$ is $v$-linked by definition, and we let $\mathcal{P}^{0}$ be an arbitrary witness to this. 
Assume that $\mathcal{P}^{\beta}$ is already defined for $\beta<\alpha$, where $0<\alpha\leq\kappa$. Assume first that $\alpha=\beta+1$ is a successor ordinal. Let $\mathcal{Q}^{\alpha}=\{ Q^{\alpha}_f:\ f \in \delta_D(X_\alpha) \}$ be a path system witnessing that $X_\alpha$ is $v$-linked. 
For each $f \in \delta_D(X_\alpha)$, define $P^{\alpha}_f$ as the union of the initial segment of $Q^{\alpha}_f$ up to its first edge $e$ that enters $\delta(X_\beta)$ and the path $P^{\beta}_e$. Clearly, $ \mathcal{P}^{\alpha}:=\{ P^{\alpha}_f:\ f\in \delta_D(X_\alpha) \}$ shows that $X_\alpha$ is $v$-linked and maintains
 the desired property. 
Now assume that $\alpha$ is a limit ordinal. The sequence $\left\langle X_\alpha:\ \kappa \right\rangle $ is 
continuous by assumption, therefore $X_\alpha=\bigcup_{\beta<\alpha}X_\beta$. Then for each $e\in \delta_D(X_\alpha)$, there is a smallest $\beta<\alpha$ with $e\in \delta_D(X_\beta)$. We must take $P^{\alpha}_e:=P^{\beta}_e$. It is routine to check that 
$\mathcal{P}^{\alpha}:=\{ P^{\alpha}_e:\ e\in \delta_D(X_\alpha) 
\}$ is suitable. Finally, $\mathcal{P}^{\kappa}$ witnesses that $X_\kappa$ is $v$-linked.
\end{proof}

\begin{corollary}\label{cor: largest bubble}
 For every $v\in V\setminus \{ r \}$, there exists a $\subseteq$-largest $v$-linked set $X_{v,D}$ in $D$. Furthermore, for every $w\in X_{v,D}$, we have $X_{w,D}\subseteq X_{v,D}$. 
\end{corollary}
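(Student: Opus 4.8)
The plan is to define $X_{v,D}$ as the union of \emph{all} $v$-linked subsets of $V$ and then verify that this union is itself $v$-linked; since it obviously contains every $v$-linked set, it is automatically the $\subseteq$-largest one. The ``furthermore'' clause will then be immediate from Lemma~\ref{lem: two bubble union} together with the maximality of $X_{v,D}$.

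For the first claim, I would start by observing that the family $\mathcal{F}_v$ of $v$-linked sets is non-empty: the singleton $\{v\}$ is $v$-linked, witnessed by the trivial one-edge paths $P_e:=\{e\}$ for $e\in\delta_D(v)$ (note $\delta_D(\{v\})=\delta_D(v)$). This step is genuinely needed, since if $\mathcal{F}_v$ were empty no largest element could exist. Fix a well-ordering $\mathcal{F}_v=\langle Y_\beta:\ \beta<\kappa\rangle$ with $\kappa\geq 1$, and set $X_0:=\{v\}$ and $X_\alpha:=\{v\}\cup\bigcup_{\beta<\alpha}Y_\beta$ for $0<\alpha\leq\kappa$. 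The sequence $\langle X_\alpha:\ \alpha\leq\kappa\rangle$ is $\subseteq$-increasing, and it is continuous: for a limit $\alpha$ one has $\bigcup_{\beta<\alpha}X_\beta=\{v\}\cup\bigcup_{\gamma<\alpha}Y_\gamma=X_\alpha$, where throwing $\{v\}$ into every term is exactly what keeps continuity at limits. I would then prove by transfinite induction on $\alpha$ that every $X_\alpha$ is $v$-linked. The base case $X_0=\{v\}$ is settled above. For a successor $\alpha=\beta+1$ we have $X_\alpha=X_\beta\cup Y_\beta$; since $X_\beta$ is $v$-linked by the induction hypothesis, $v\in X_\beta$, and $Y_\beta$ is $v$-linked, Lemma~\ref{lem: two bubble union} (with the distinguished vertex ``$w$'' there taken to be $v$) gives that $X_\alpha$ is $v$-linked. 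For a limit $\alpha$, the sequence $\langle X_\beta:\ \beta<\alpha\rangle$ is a $\subseteq$-increasing continuous sequence of $v$-linked sets indexed by the infinite ordinal $\alpha$, so Lemma~\ref{lem: chain bubble union} yields that $X_\alpha=\bigcup_{\beta<\alpha}X_\beta$ is $v$-linked. (Lemma~\ref{lem: chain bubble union} is phrased for infinite cardinals; for a general infinite limit ordinal $\alpha$ one applies it to a continuous cofinal sub-sequence of order type $\mathrm{cf}(\alpha)$, or simply notes that its proof goes through verbatim for any infinite limit ordinal.) Taking $\alpha=\kappa$ gives that $X_{v,D}:=X_\kappa=\bigcup\mathcal{F}_v$ is $v$-linked, hence the $\subseteq$-largest $v$-linked set.

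For the ``furthermore'' clause, fix $w\in X_{v,D}$. Since $X_{v,D}\subseteq V\setminus\{r\}$, we have $w\neq r$, so the largest $w$-linked set $X_{w,D}$ exists by the first part. Now $X_{v,D}$ is $v$-linked, $w\in X_{v,D}$, and $X_{w,D}$ is $w$-linked, so Lemma~\ref{lem: two bubble union} shows that $X_{v,D}\cup X_{w,D}$ is $v$-linked; by maximality of $X_{v,D}$ this forces $X_{v,D}\cup X_{w,D}=X_{v,D}$, i.e.\ $X_{w,D}\subseteq X_{v,D}$. I expect the only non-routine point to be the middle paragraph: setting up the transfinite recursion so that the chain stays continuous and $X_0$ is already $v$-linked, and matching the construction against the hypotheses of Lemmas~\ref{lem: two bubble union} and~\ref{lem: chain bubble union}; the non-emptiness of $\mathcal{F}_v$ and the derivation of the ``furthermore'' clause are routine.
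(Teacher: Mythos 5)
Your proof is correct and follows essentially the same route as the paper: define $X_{v,D}$ as the union of all $v$-linked sets, show it is $v$-linked via Lemmas~\ref{lem: two bubble union} and~\ref{lem: chain bubble union}, and deduce the ``furthermore'' clause from Lemma~\ref{lem: two bubble union} plus maximality exactly as the paper does. The one point worth noting is that the paper's published proof cites only Lemma~\ref{lem: chain bubble union} for the first claim, which is not a direct application since the family of all $v$-linked sets is not a chain; your explicit transfinite recursion (nonemptiness via $\{v\}$, successor steps via Lemma~\ref{lem: two bubble union}, limit steps via Lemma~\ref{lem: chain bubble union}) fills in precisely the argument the paper leaves implicit, and your side remark that the chain lemma extends from infinite cardinals to arbitrary infinite limit ordinals is accurate and handles the only technical mismatch.
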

\begin{proof}
  Lemma \ref{lem: chain bubble union} ensures that the union $X_{v,D}$ of all $v$-linked sets is $v$-linked, and thus $X_{v,D}$ is the $\subseteq$-largest $v$-linked set. Now, for any $w\in X_{v,D}$, the set $X_{v,D}\cup X_{w,D}$  is 
  $v$-linked by Lemma \ref{lem: two bubble union}. By the maximality of $X_{v,D}$, it follows that $X_{w,D} \subseteq X_{v,D}$.
\end{proof}

\begin{proposition}\label{claim: vlinked covers}
 For a $v$-linked set $X$ and $I\in \mathcal{G}_D(v)$, there exists a path system $\mathcal{R}$ that covers $I$ while also witnessing that $X$ is $v$-linked. 
\end{proposition}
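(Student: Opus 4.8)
The plan is to fuse the two path families provided by the hypotheses into a single one via Theorem~\ref{thm: Pym}, applied not in $D$ but in an auxiliary digraph that presents $D$ ``from the point of view of $X$''. Write $E_X$ for the set of edges of $D$ having both endvertices in $X$. Note first that every path that will occur (a witnessing path $P_e$ runs from the tail of $e$ to $v$; an $rv$-path runs from $r$ to $v$) has both of its endvertices prescribed and is therefore finite, so $\mathsf{in}$ and $\mathsf{ter}$ apply to all the families below. Fix a witness $\{P_e:\ e\in\delta_D(X)\}$ that $X$ is $v$-linked. Since $v\in X$ but $r\notin X$, the path $P_e$ enters $X$ through $e$, and it cannot leave $X$ afterwards: otherwise, to reach $v$ it would have to re-enter $X$ through some $e'\in\delta_D(X)\setminus\{e\}$, i.e.\ through the initial edge of $P_{e'}$, contradicting edge-disjointness. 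Hence $P_e$ meets $\delta_D(X)$ only in $e$, and $P_e\setminus\{e\}$ is a path from the head of $e$ to $v$ with all its vertices in $X$ and all its edges in $E_X$. Likewise $I\in\mathcal{G}_D(v)$ supplies a family of edge-disjoint $rv$-paths covering $I$; for $i\in I$ let $Q_i$ be the unique member through $i$ (its terminal edge is $i$, as $i\in\delta_D(v)$), let $e_i$ be the last edge of $Q_i$ in $\delta_D(X)$ — one exists because $Q_i$ goes from $r\notin X$ to $v\in X$ — and let $Q_i'$ be the terminal segment of $Q_i$ beginning with $e_i$. Then $i\mapsto e_i$ is injective, and by the choice of $e_i$ the path $Q_i'\setminus\{e_i\}$ runs from the head of $e_i$ to $v$ with all its vertices in $X$, all its edges in $E_X$, and terminal edge $i$; moreover, if $i\in\delta_D(X)$ then $e_i=i$ and $Q_i'=\{i\}$.

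Next I would form the auxiliary digraph $D'$ with vertex set $X\cup\{s\}$, where $s$ is a new vertex, and edge set $E_X\cup\{\hat e:\ e\in\delta_D(X)\}$, where $\hat e$ denotes a new edge from $s$ to the head of $e$. In $D'$ the vertex $s$ has no ingoing edges and its outgoing edges are exactly the $\hat e$; hence every $sv$-path of $D'$ consists of a single edge $\hat e$ followed by a path from the head of $e$ to $v$ that stays inside $X$ (uses only edges of $E_X$). By the previous paragraph, $\mathcal{P}':=\{\hat e\cup(P_e\setminus\{e\}):\ e\in\delta_D(X)\}$ and $\mathcal{Q}':=\{\hat{e_i}\cup(Q_i'\setminus\{e_i\}):\ i\in I\}$ are families of edge-disjoint $sv$-paths in $D'$; moreover $\mathsf{in}(\mathcal{P}')$ is precisely the set of edges leaving $s$, and $\mathsf{ter}(\mathcal{Q}')=(I\cap E_X)\cup\{\hat{e_i}:\ i\in I\setminus E_X\}$. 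Applying Theorem~\ref{thm: Pym} in $D'$ to the vertices $s$, $v$ and the families $\mathcal{P}'$, $\mathcal{Q}'$ produces an edge-disjoint family $\mathcal{R}'$ of $sv$-paths with $\mathsf{in}(\mathcal{P}')\subseteq\mathsf{in}(\mathcal{R}')$ and $\mathsf{ter}(\mathcal{Q}')\subseteq\mathsf{ter}(\mathcal{R}')$.

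It remains to pull $\mathcal{R}'$ back to $D$, and this is the step carrying the weight of the argument. Every member of $\mathcal{R}'$ begins with an edge out of $s$, hence with some $\hat e$; since all the $\hat e$ occur among the initial edges of $\mathcal{R}'$ and $\mathcal{R}'$ is edge-disjoint, we may write $\mathcal{R}'=\{R_e':\ e\in\delta_D(X)\}$ with $R_e'$ beginning with $\hat e$. By the shape of $sv$-paths in $D'$ we have $R_e'=\hat e\cup\pi_e$, where $\pi_e$ is a path from the head of $e$ to $v$ lying inside $X$; since the tail of $e$ is outside $X$, the edge set $R_e:=\{e\}\cup\pi_e$ is a path of $D$ from the tail of $e$ to $v$ with initial edge $e$. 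The family $\mathcal{R}:=\{R_e:\ e\in\delta_D(X)\}$ is edge-disjoint — a common edge of $R_e$ and $R_{e'}$ can be neither $e$ nor $e'$ (those belong to $\delta_D(X)$, whereas $\pi_e,\pi_{e'}\subseteq E_X$), hence would be a common edge of $R_e'$ and $R_{e'}'$, which is impossible — so $\mathcal{R}$ witnesses that $X$ is $v$-linked. Finally $\mathcal{R}$ covers $I$: if $i\in I\cap E_X$, then $i\in\mathsf{ter}(\mathcal{Q}')\subseteq\mathsf{ter}(\mathcal{R}')$, so $i$ is the terminal edge of some $R_e'$ and thus $i\in\pi_e\subseteq R_e$; if $i\in I\setminus E_X$, i.e.\ $i\in\delta_D(X)$, then the head of $i$ is $v$, which forces $R_i'$ to be the single edge $\hat{e_i}$ and hence $R_i=\{i\}\ni i$.

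The main obstacle, as just indicated, is the pull-back — more precisely, the need to choose the auxiliary digraph so that it forces the linkage returned by Theorem~\ref{thm: Pym} to traverse each prescribed edge $e$ as a first edge; Theorem~\ref{thm: Pym} by itself only controls $\mathsf{in}$ and $\mathsf{ter}$, not the actual routes. A naive attempt, such as attaching $s$ to the \emph{tails} of the edges of $\delta_D(X)$ without deleting anything, would fail: the linkage could then leave $X$ and re-enter it, so the pulled-back objects need not be paths with initial edge $e$. Redirecting $\delta_D(X)$ to emanate from $s$ and discarding all of $D$ outside $X$ leaves the $\hat e$ as the only edges entering $X$, which is exactly what makes the pull-back automatic. (I also use that the family $\mathcal{R}'$ supplied by Theorem~\ref{thm: Pym} consists of pairwise edge-disjoint paths, which is part of the statement of that theorem.)
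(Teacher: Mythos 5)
Your proof is correct and follows essentially the same route as the paper's one-line argument: contract $V\setminus X$ to a single vertex (your $s$), lift $\mathcal{P}$ and the terminal segments of $\mathcal{Q}$ from their last $\delta_D(X)$-edge to $sv$-paths in the contracted digraph, apply Theorem~\ref{thm: Pym} there, and pull the resulting linkage back to $D$. The only cosmetic difference is that you build the auxiliary digraph $D'$ by deleting the edges leaving $X$ and those internal to $V\setminus X$ rather than by identifying $V\setminus X$; this has no effect on the set of $sv$-paths, so the argument is the same in substance — your write-up just makes explicit the bookkeeping (why each $P_e$ stays inside $X$, the shape of $\mathsf{ter}(\mathcal{Q}')$, and the pull-back of $\mathcal{R}'$) that the paper leaves implicit.
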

\begin{proof}
 Take a path system $\mathcal{P}$ witnessing that $X$ is $v$-linked and a path system $\mathcal{Q}$ witnessing $I\in \mathcal{G}_D(v)$. Apply Theorem \ref{thm: Pym} to $\mathcal{P}$ and the terminal segments of the paths in $\mathcal{Q}$ from their last edge in $\delta_D(X)$ in the digraph that we obtain from $D$ by identifying the vertices in $V\setminus X$.
\end{proof}

A set $X\subseteq V\setminus \{ r \}$ is \emph{fillable} in $D$ if there is a set $\mathcal{P}$ of edge-disjoint $rX$-paths with 
$\mathsf{ter}(\mathcal{P})=\delta_D(X)$. 
\begin{observation}\label{obs: fillable and bubble}
If $X$ is fillable and $v$-linked, then there is a set $\mathcal{P}$ of edge-disjoint $rv$-paths such that the $rv$-cut 
$C:=\delta_D(X)$ is a transversal of $\mathcal{P}$.  
\end{observation}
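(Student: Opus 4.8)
The plan is to glue the two given path systems together along their common edges in $\delta_D(X)$. Since $X$ is fillable, fix a set $\mathcal{Q}$ of edge-disjoint $rX$-paths with $\mathsf{ter}(\mathcal{Q})=\delta_D(X)$. Distinct members of $\mathcal{Q}$ have distinct terminal edges (otherwise they would share that edge), so $\mathcal{Q}=\{Q_e:\ e\in\delta_D(X)\}$, where $Q_e$ is the path whose terminal edge is $e$; in particular $Q_e$ ends at the head of $e$, which lies in $X$. Since $X$ is $v$-linked, fix a system $\{P_e:\ e\in\delta_D(X)\}$ of edge-disjoint paths with $P_e$ having initial edge $e$ and terminal vertex $v$. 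For each $e\in\delta_D(X)$ I would set $R_e:=Q_e\cup P_e$ and then verify that $\mathcal{P}:=\{R_e:\ e\in\delta_D(X)\}$ is a set of edge-disjoint $rv$-paths for which $C:=\delta_D(X)$ is a transversal. Observe right away that $C$ is an $rv$-cut, since $r\notin X\ni v$ forces every $rv$-path to use an edge of $\delta_D(X)$.

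The heart of the matter is the following structural claim, and this is the only place the edge-disjointness of the two systems is really used: $e$ is the only edge of $Q_e$ lying in $\delta_D(X)$, and likewise $e$ is the only edge of $P_e$ lying in $\delta_D(X)$. Indeed, if $Q_e$ contained some $f\in\delta_D(X)$ with $f\ne e$, then $f$ would also be the terminal edge of $Q_f$, so $Q_e$ and $Q_f$ would share $f$, contradicting edge-disjointness; the symmetric argument using \emph{initial} edges handles $P_e$. Consequently $Q_e$ meets $X$ only in its last vertex (the head of $e$), whereas $P_e$, after its initial edge $e$, stays inside $X$ all the way to $v$ — for if it ever left $X$ it would have to re-enter through some edge of $\delta_D(X)$ other than $e$. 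In particular, every edge of $Q_e$ other than $e$ has both ends outside $X$, and every edge of $P_e$ other than $e$ has both ends in $X$.

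Granting the structural claim, the three required properties fall out. First, $Q_e$ and $P_e$ have exactly the two endpoints of $e$ as common vertices, occurring as the last two vertices of $Q_e$ and the first two vertices of $P_e$, so $R_e$ is a genuine simple path from $r$ to $v$. Second, the edge set of $R_e$ is the disjoint union of $\{e\}$, a set of edges with both ends outside $X$ (contributed by $Q_e$), and a set of edges with both ends in $X$ (contributed by $P_e$); for $e\ne e'$, none of these three pieces of $R_e$ can meet any of the three pieces of $R_{e'}$, using that the three edge classes are pairwise disjoint and that $\mathcal{Q}$ and $\{P_e\}$ are each edge-disjoint. Third, $R_e\cap C=\{e\}$ by the structural claim, so $e\mapsto R_e$ exhibits $C$ as a transversal of $\mathcal{P}$. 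The only step calling for care is the structural claim together with the resulting check that the glued walk $R_e$ is a simple path; everything after that is bookkeeping with the three edge classes.
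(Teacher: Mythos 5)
Your proof is correct and follows essentially the same route as the paper's one-line proof (take the path system witnessing fillability and extend those paths forward through $X$ to $v$ using the path system witnessing $v$-linkedness); you have simply spelled out, via the structural claim that each $Q_e$ and $P_e$ meets $\delta_D(X)$ only in $e$, the bookkeeping needed to see that the glued walks $R_e=Q_e\cup P_e$ are simple paths, pairwise edge-disjoint, and meet $C$ exactly once.
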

\begin{proof}
 Take a path system witnessing that $X$ is fillable, and extend these paths forward to reach $v$ via a path system witnessing that $X$ is $v$-linked. 
\end{proof}

\begin{lemma}\label{lem: superfillable}
 For every $v\in V\setminus \{ r \}$, the set $X_{v,D}$ is fillable. Moreover, if $D'$ is obtained from $D$ by adding a new edge $e$ that enters 
 $X_{v,D}$, 
 then 
 $X_{v,D}$ is fillable with respect to $D'$ as well.
\end{lemma}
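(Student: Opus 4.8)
The plan is to reformulate fillability as an ordinary Menger problem in an auxiliary digraph and then play the resulting min‑cut structure against the maximality of $X_{v,D}$. Write $X:=X_{v,D}$ and fix a path system $\{P_e:\ e\in\delta_D(X)\}$ witnessing that $X$ is $v$-linked. If some $P_e$ used an edge of $\delta_D(X)$ other than $e$, that edge would also be the initial edge of another path of the system, contradicting edge-disjointness; hence $P_e$ is $e$ followed by a path inside $D[X]$ from the head of $e$ to $v$, and in particular the head of every edge of $\delta_D(X)$ reaches $v$ within $X$. Now form $\hat{D}$ from $D[V\setminus X]$ by adding a new sink $t$ and, for each $e\in\delta_D(X)$, an edge $\hat{e}$ from the tail of $e$ to $t$. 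Unravelling this contraction, edge-disjoint families of $rX$-paths in $D$ with terminal edge set $\delta_D(X)$ correspond exactly to edge-disjoint families of $rt$-paths in $\hat{D}$ whose terminal edges are all of the edges entering $t$; it therefore suffices to produce such a family in $\hat{D}$.

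Apply Theorem~\ref{thm: gen Menger} to $r,t$ in $\hat{D}$, obtaining an edge-disjoint family $\mathcal{Q}$ of $rt$-paths and an $rt$-cut $C$ that is a transversal of $\mathcal{Q}$ (we may take $C\subseteq\bigcup\mathcal{Q}$). Let $A$ be the set of vertices of $\hat{D}$ that still reach $t$ in $\hat{D}-C$, and set $Z:=V(\hat{D})\setminus A$; then $r\in Z$, $t\in A$, and every edge from $Z$ into $A$ lies in $C$. Replacing $C$ by the set of \emph{all} edges from $Z$ into $A$ only shrinks it, keeps it a transversal $rt$-cut, and makes it equal to the set of $C$-edges of the paths of $\mathcal{Q}$. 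Put $W:=A\setminus\{t\}\subseteq V\setminus X$. If $W=\emptyset$, then every edge entering $t$ starts in $Z$, hence lies in $C$, hence is the terminal edge of exactly one path of $\mathcal{Q}$; thus $\mathcal{Q}$ covers all edges entering $t$ and $X$ is fillable.

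Assume now $W\neq\emptyset$; I claim $X\cup W$ is $v$-linked, which contradicts $X=X_{v,D}$ being the largest $v$-linked set (note $W\neq\emptyset$ and $W\cap X=\emptyset$). Read back in $D$, the cut $C$ is precisely $\delta_D(X\cup W)$, splitting into edges from $Z$ into $X$ (\emph{type~I}) and edges from $Z$ into $W$ (\emph{type~II}); each such $g$ is the $C$-edge of a unique $Q_g\in\mathcal{Q}$. For type~I put $R_g:=P_g$ (legitimate since $g\in\delta_D(X)$). For type~II, the path $Q_g$ enters $W$ along $g$, stays in $W$, then reaches $t$ along $\hat{e}_g$ for some $e_g\in\delta_D(X)$ with tail in $W$; let $R_g$ be the segment of $Q_g$ (read in $D$) from $g$ through $e_g$, followed by the segment of $P_{e_g}$ from the head of $e_g$ to $v$. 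Each $R_g$ is a path with initial edge $g$ and terminal vertex $v$, and the family $\{R_g:\ g\in\delta_D(X\cup W)\}$ is edge-disjoint: any edge of some $R_g$ with tail in $X$ belongs to the witness path $P_e$ for an index $e$ in $\{g:\ g\text{ type I}\}\cup\{e_g:\ g\text{ type II}\}$, a set of distinct edges of $\delta_D(X)$, so such edges are pairwise disjoint; and any edge with tail outside $X$ lies on $Q_g$ (it is $g$, or — in the type~II case — part of the $W$-portion of $Q_g$ up to and including $e_g$), and distinct $Q_g$'s are edge-disjoint. Hence $X\cup W$ is $v$-linked, the desired contradiction, so $X_{v,D}$ is fillable. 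The two places needing care are the passage to this ``$t$-side'' cut $C$ (so that it reappears as $\delta_D(X\cup W)$) and the bookkeeping in this last step — this is where I expect the main difficulty to lie.

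For the \emph{moreover} statement, first observe that every $u\in V\setminus X_{v,D}$ is reachable from $r$ inside $D[V\setminus X_{v,D}]$: otherwise the nonempty set $N$ of vertices of $V\setminus X_{v,D}$ not reachable there would receive no edge from $(V\setminus X_{v,D})\setminus N$, so $\delta_D(X_{v,D}\cup N)\subseteq\delta_D(X_{v,D})$ and the witness paths $P_f$ for $f\in\delta_D(X_{v,D}\cup N)$, which stay in $X_{v,D}$, would show $X_{v,D}\cup N$ is $v$-linked, contradicting maximality. Now let $D'=D+e$ with $e$ entering $X:=X_{v,D}$; then $D'[V\setminus X]=D[V\setminus X]$, the auxiliary digraph $\hat{D'}$ is $\hat{D}$ with one extra edge $\hat{e}$ into $t$, and the tail of $\hat{e}$ is reachable from $r$ in $\hat{D}$ by the observation just made. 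Running the argument above in $\hat{D'}$ for the set $X$: a hypothetical failure of fillability again yields a proper $v$-linked superset $X\cup W$ of $X$, and because the extra edge $e$ enters $X$ one checks that the path system witnessing this can be taken to avoid $e$, so that it already witnesses $v$-linkedness in $D$ and again contradicts the maximality of $X_{v,D}$. (Alternatively one may try to feed a fillability witness of $X$ in $D$ together with a single $r$--$\mathrm{tail}(e)$--$t$ path through Theorem~\ref{thm: Pym}.) Making this ``already in $D$'' step precise is the point where I expect the main difficulty of the \emph{moreover} part to lie.
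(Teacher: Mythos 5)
The first part of your proof (fillability of $X_{v,D}$ in $D$) is correct and is essentially the paper's argument in disguise: both contract $X_{v,D}$ to a sink $t$, apply Theorem~\ref{thm: gen Menger}, read the resulting min‑cut back as $\delta_D(Y)$ for a vertex set $Y\supseteq X_{v,D}$, check that $Y$ is $v$-linked by concatenating terminal segments of the Menger paths with the $v$-linkedness witness of $X_{v,D}$, and conclude $Y=X_{v,D}$ by maximality. The only cosmetic difference is that the paper takes $Y$ to be the set of vertices not reachable from $r$ after deleting the cut, while you work on the $t$-side with $A\setminus\{t\}$ and first shrink $C$ to the set of $Z\to A$ edges; both routes are fine, and your edge‑disjointness bookkeeping for the paths $R_g$ checks out.

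The \emph{moreover} part has a real gap, which you correctly flag yourself. Rerunning the cut argument in $\hat{D'}$ produces a set $X\cup W$ together with paths $R_g$, but to contradict the maximality of $X_{v,D}$ you must show $X\cup W$ is $v$-linked \emph{in $D$}, i.e.\ the witness must avoid the new edge $e$. This is not automatic: $\delta_{D'}(X_{v,D})=\delta_D(X_{v,D})\cup\{e\}$, and there is no path $P_e$ in the original witness $\{P_f:\ f\in\delta_D(X_{v,D})\}$. Concretely, if $\mathsf{tail}(e)$ ends up in $Z$, then $\hat{e}\in C'$ and you would need $R_{e}=P_e$, which does not exist; if $\mathsf{tail}(e)$ ends up in $W$ and $\hat{e}$ is the terminal edge of some $Q_g$, then $e_g=e$ and you again need $P_e$. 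Your suggested fallback via Theorem~\ref{thm: Pym} also does not close this, since Pym only guarantees $\mathsf{in}(\mathcal{P})\subseteq\mathsf{in}(\mathcal{R})$ and $\mathsf{ter}(\mathcal{Q})\subseteq\mathsf{ter}(\mathcal{R})$, not that $\mathsf{ter}(\mathcal{R})$ covers all of $\delta_{D'}(X_{v,D})$. The paper avoids this issue entirely with an augmenting‑path argument: starting from a fillability witness $\mathcal{P}$ of $X_{v,D}$ in $D$, it reverses the edges of $\bigcup\mathcal{P}$ in $D'$ and shows (using the maximality of $X_{v,D}$, much as in the first part) that $r$ still reaches $X_{v,D}$ in the resulting digraph; the symmetric difference with this path is then a fillability witness in $D'$. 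Since the witness of $v$-linkedness never enters the picture, the missing $P_e$ simply never arises.
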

\begin{proof}
Apply Theorem \ref{thm: gen Menger} to the digraph $D_t$ obtained from $D$ by contracting the vertex set $ X_{v,D}$ to a single vertex $t$, with $s:=r$ and $t$. Let $\mathcal{P}$ be the resulting path system and let $C$ be an $st$-cut in $D_t$ that forms a transversal of $\mathcal{P}$. Define $X$ as the set of vertices in $V$ that are not reachable from $r$ in $(V,E\setminus C)$. Clearly, $X \supseteq 
X_{v,D}$ and $\delta_D(X)=C$. 
We claim that $X$ is $v$-linked in $D$. Indeed, follow the terminal segments of the paths in $\mathcal{P}$ from $C$ until they first enter $X_{v,D}$, and then extend them using the fact that $X_{v,D}$ is $v$-linked. The resulting path system witnesses that $X$ is $v$-linked. By the maximality of $X_{v,D}$, it follows that $X=X_{v,D}$. Then the initial segments of the paths in $\mathcal{P}$ up to $C$ provide a path system that witnesses the fillability of $ X_{v,D}$.

We now turn to the proof of the ``moreover'' part. Let $\mathcal{P}$ be a path system witnessing that $X_{v,D} $ is fillable in $D$. Consider the digraph $D''$ obtained from $D'$ by reversing the orientation of all edges in $\bigcup \mathcal{P}$. It suffices to show that there is a path $P$ from $r$ to $X_{v,D}$ in $D''$, since taking the symmetric difference of $\bigcup \mathcal{P}$ and $P$ leads to a path system witnessing that $X_{v,D}$ is fillable with respect to $D'$.
Suppose for a contradiction that no such path exists. Let $X$ be the set of vertices in $V$ that are not reachable from $r$ after the deletion of $C$ from $D''$. Then $X \supsetneq X_{v,D}$, because $X$ must contain the tail of $e$. Since $X$ has no ingoing edges 
in $D''$, we conclude that  $\delta_D(X)$ is a transversal of $\mathcal{P}$. However, the terminal segments of the paths in $\mathcal{P}$ from $\delta_D(X)$ combined with the $v$-likedness of $X_{v,D}$ shows that $X$ is $v$-linked in $D$, which is a contradiction because $X \supsetneq X_{v,D}$.
\end{proof}

\begin{lemma}\label{lem: largness char}
 A set $L\subseteq E$ is large if and only if for every $v\in V \setminus \{ r \}$, the set $X_{v,D(L)}$ contains the tails of all the edges in
 $\delta_{D(E\setminus L)}(v)$.
\end{lemma}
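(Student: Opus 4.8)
The statement is an ``if and only if'', and the plan is to treat the two implications separately; in both directions the key object is, for a fixed $v\in V\setminus\{r\}$, the $\subseteq$-largest $v$-linked set $X_{v,D(L)}$ of the subdigraph $D(L)$ supplied by Corollary~\ref{cor: largest bubble}, together with its set of entering edges $\delta_{D(L)}(X_{v,D(L)})$. The guiding remark is that, since $r$ has no ingoing edge (so $r\notin X_{v,D(L)}$) while $v\in X_{v,D(L)}$, \emph{every} $rv$-path of $D$ must cross into $X_{v,D(L)}$, hence $\delta_D(X_{v,D(L)})$ is automatically an $rv$-cut of $D$. Thus ``$L$ is large at $v$'' is intimately tied to the equality $\delta_D(X_{v,D(L)})=\delta_{D(L)}(X_{v,D(L)})$, i.e. to the statement that no edge of $E\setminus L$ enters $X_{v,D(L)}$; the condition in the lemma is the ``local at $v$'' shadow of the latter, the passage between the two being governed by the ``furthermore'' part of Corollary~\ref{cor: largest bubble}.

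For the implication ``condition $\Rightarrow$ large'', fix $v$ and write $X:=X_{v,D(L)}$. I would first promote the hypothesis from edges entering $v$ to edges entering $X$: if $e=(a,b)\in E\setminus L$ with $b\in X$, then the hypothesis applied at $b$ gives $a\in X_{b,D(L)}$, and $X_{b,D(L)}\subseteq X_{v,D(L)}=X$ by Corollary~\ref{cor: largest bubble}, so $a\in X$. Hence $\delta_D(X)=\delta_{D(L)}(X)=:C$. Now $X$ is fillable in $D(L)$ by Lemma~\ref{lem: superfillable} and is $v$-linked in $D(L)$ by its very definition, so Observation~\ref{obs: fillable and bubble} (applied inside $D(L)$) produces a family $\mathcal{P}$ of edge-disjoint $rv$-paths in $D(L)$ for which $C$ is a transversal; since $C=\delta_D(X)$ is an $rv$-cut of $D$ by the remark above, $\mathcal{P}$ together with this transversal witnesses that $L$ is large at $v$. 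As $v$ was arbitrary, $L$ is large.

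For the converse, assume $L$ is large and fix $v$. Largeness at $v$ gives a family $\mathcal{P}_v$ of edge-disjoint $rv$-paths in $D(L)$ and a transversal $C_0$ of $\mathcal{P}_v$ that is an $rv$-cut of $D$; note that $C_0$ meets each path of $\mathcal{P}_v$ in exactly one edge and that $C_0\subseteq\bigcup\mathcal{P}_v\subseteq L$. Let $Y$ be the set of vertices not reachable from $r$ in $(V,E\setminus C_0)$; then $v\in Y$, and $\delta_D(Y)\subseteq C_0\subseteq L$ because $Y$ lies on the unreachable side of the cut $C_0$. The main point is that $Y$ is $v$-linked in $D(L)$: each $P\in\mathcal{P}_v$ runs from $r\notin Y$ to $v\in Y$, so it has a first edge entering $Y$; this edge lies in $\delta_D(Y)\subseteq C_0$, hence it is \emph{the} edge of $P$ belonging to the transversal $C_0$, and $P$ no longer leaves $Y$ afterwards. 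Running over all $P$, these first-entering edges are pairwise distinct (the paths are edge-disjoint), exhaust $C_0$, and — being in $L$ and entering $Y$ — they are exactly $\delta_{D(L)}(Y)=C_0$; the terminal segments of the paths of $\mathcal{P}_v$ starting at these edges then witness that $Y$ is $v$-linked in $D(L)$. By Corollary~\ref{cor: largest bubble}, $Y\subseteq X_{v,D(L)}$. Finally, if $(u,v)\in\delta_{D(E\setminus L)}(v)$, then $u\in Y$: otherwise $(u,v)$ would enter $Y$ and thus lie in $\delta_D(Y)\subseteq L$, contradicting $(u,v)\in E\setminus L$. Hence $u\in X_{v,D(L)}$, which is the required conclusion.

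The step I expect to be the main obstacle is the verification that $Y$ is $v$-linked in $D(L)$ in the converse direction — precisely, identifying the first-entering edges of the paths of $\mathcal{P}_v$ with $\delta_{D(L)}(Y)$ and checking that their terminal segments are edge-disjoint $rv$-paths of $D(L)$ starting at exactly these edges; this is where the transversal property (one edge per path) and the edge-disjointness are used essentially. Everything else is bookkeeping, the one recurring subtlety being to track whether a given invocation of Lemma~\ref{lem: superfillable}, Observation~\ref{obs: fillable and bubble}, or Corollary~\ref{cor: largest bubble} takes place inside $D(L)$ while the cut condition refers to all of $D$ — a discrepancy reconciled precisely by the identity $\delta_D(X)=\delta_{D(L)}(X)$ in the first direction and by the inclusion $\delta_D(Y)\subseteq L$ in the second.
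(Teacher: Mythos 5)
Your proof is correct and follows essentially the same route as the paper: in the "condition $\Rightarrow$ large" direction you use the hypothesis together with Corollary~\ref{cor: largest bubble} to show $\delta_D(X_{v,D(L)})=\delta_{D(L)}(X_{v,D(L)})$, then invoke Lemma~\ref{lem: superfillable} and Observation~\ref{obs: fillable and bubble}; in the converse direction you pass from the transversal $rv$-cut to the set $Y$ of vertices unreachable from $r$ in $(V,E\setminus C_0)$, show $Y$ is $v$-linked in $D(L)$, and conclude $Y\subseteq X_{v,D(L)}$. Your verification that $Y$ is $v$-linked (first-entry edges coincide with $C_0$, paths stay in $Y$ afterwards by the transversal property) is slightly more explicit than the paper's, but the argument is the same.
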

\begin{proof}
    Let $L\subseteq E$ be large and let  $v\in V \setminus \{ r \}$. By definition, $D(L)$ admits a set $\mathcal{P}$ of edge-disjoint $rv$-paths with a transversal $C$ that is an $rv$-cut in $D$. Let $X$ be the set of vertices not reachable from $r$ in $(V,E\setminus C)$. The terminal segments of the paths in $\mathcal{P}$ from $C$ witness that $X$ is $v$-linked in $D(L)$, so $X_{v,D(L)} \supseteq X$. 
    Suppose, for a contradiction, that there is an edge $e\in \delta_{D(E\setminus L)}(v)$ that has its tail $u$ not in $ X$. Note that $e\notin C$ since $C\subseteq L$. By the definition of 
    $X$, there exists an $rv$-path $P_u$ in $(V,E\setminus C)$. Extending $P_u$ by $e$ then results in an $rv$-path in $(V,E\setminus C)$, contradicting the fact that $C$ is an $rv$-cut.
     This shows that the tail of every edge in $\delta_{D(E\setminus L)}(v)$ is in $X$, and hence, by $X \subseteq X_{v,D(L)}$, in $X_{v,D(L)}$ as well.

    For the other direction, suppose that for every vertex $v\in V \setminus \{ r \}$, the set $X_{v,D(L)}$ contains the tail of every edge in $\delta_{D(E\setminus L)}(v)$. Fix $v\in V \setminus \{ r \}$ and let $X:=X_{v,D(L)}$. By definition, $X$ is $v$-linked in $D(L)$, and by Lemma \ref{lem: superfillable}, $X$ is also fillable in $D(L)$.
    Observation \ref{obs: fillable and bubble} shows that there is a set $\mathcal{P}$ of edge-disjoint $rv$-paths in $D(L)$ such that
     $C:=\delta_{D(L)}(X)$ is a transversal of $\mathcal{P}$. We claim that $\delta_{D(L)}(X)=\delta_{D}(X)$, and therefore $C$ is also an $rv$-cut in $D$ and hence $\mathcal{P}$ witnesses the largeness of $L$ at $v$. Suppose for a contradiction that there exists an edge $e\in \delta_{D}(X)\setminus \delta_{D(L)}(X)$. Let $u$ and $w$ be the tail and head of $e$, respectively. Since $e\in \delta_{D(E\setminus L)}(w)$, the set $X_{w,D(L)}$ contains $u$ by assumption. Corollary \ref{cor: largest bubble} ensures that $ X_{w,D(L)}\subseteq X$, therefore $u\in X$ as well. But this contradicts $e\in \delta_{D}(X)$.
\end{proof}

Let $\mathsf{tail}(e)$ and $\mathsf{head}(e)$ denote the tail and head of an edge $e$ respectively.
\begin{observation}\label{obs: well-order}
 If $D=(V,E)$ is an a acyclic digraph without backward-infinite paths, then there is a well-order $<$ of $V$ such that 
 $\mathsf{tail}(e)<\mathsf{head}(e)$  for every 
 $e\in E$.
\end{observation}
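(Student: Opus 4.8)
The plan is to produce the desired well-order by stratifying $V$ with a rank function derived from a well-founded relation, and then well-ordering each stratum arbitrarily and concatenating the pieces.

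First I would introduce the relation $\prec$ on $V$ defined by $u\prec v$ iff there is an edge with tail $u$ and head $v$. The heart of the matter is to observe that the two hypotheses on $D$ make $\prec$ well-founded, i.e., there is no infinite sequence $v_0\succ v_1\succ v_2\succ\cdots$. Indeed, such a sequence would come with edges $v_{i+1}\to v_i$ for every $i$; if two of the $v_i$ coincided, these edges would contain a closed walk of positive length and hence a directed cycle, contradicting acyclicity; so the $v_i$ are pairwise distinct and the edges $v_{i+1}\to v_i$ constitute a backward-infinite path, contradicting the second hypothesis. I expect this step to be essentially the only real content of the proof: both hypotheses are used precisely here, and everything afterwards is routine set theory.

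Next, since $\prec$ is a well-founded relation on a set, well-founded recursion lets me define the rank function $\rho\colon V\to\mathrm{Ord}$ by $\rho(v):=\sup\{\,\rho(u)+1:\ u\prec v\,\}$ (so $\rho(v)=0$ when $v$ has no ingoing edge). By construction $u\prec v$ implies $\rho(u)<\rho(v)$; in particular $\rho(\mathsf{tail}(e))<\rho(\mathsf{head}(e))$ for every $e\in E$. I would remark that $\rho$ genuinely needs ordinal rather than merely natural-number values, since ranks such as $\omega$ can already occur (a vertex may have predecessors of every finite rank without any backward-infinite path arising).

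Finally I would assemble the well-order. The range of $\rho$ is a set of ordinals, and for each $\alpha$ in it I fix, using the well-ordering theorem, an arbitrary well-order $<_\alpha$ of the fibre $\rho^{-1}(\alpha)$. Define $<$ on $V$ lexicographically: $u<v$ iff $\rho(u)<\rho(v)$, or else $\rho(u)=\rho(v)=\alpha$ and $u<_\alpha v$. A short check shows $<$ is a well-order: an arbitrary nonempty $S\subseteq V$ has a $<$-least element, obtained by first taking the least $\alpha$ with $S\cap\rho^{-1}(\alpha)\neq\emptyset$ and then the $<_\alpha$-least element of $S\cap\rho^{-1}(\alpha)$. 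And for every $e\in E$ we get $\mathsf{tail}(e)<\mathsf{head}(e)$ immediately from $\rho(\mathsf{tail}(e))<\rho(\mathsf{head}(e))$, which is what we wanted.
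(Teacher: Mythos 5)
Your proof is correct, and it takes a genuinely different route from the paper's. The paper argues that a nonempty acyclic digraph with no backward-infinite path must have a source (otherwise one greedily builds an infinite backward walk, which either revisits a vertex and yields a cycle, or is a backward-infinite path), and then constructs the well-order by transfinite recursion: at each step delete a source from the current digraph, and let the order of deletion be $<$. You instead observe that the immediate-predecessor relation $\prec$ is well-founded — this is exactly the same core observation, since ``no infinite $\prec$-descending chain'' and ``every nonempty (induced sub)digraph has a source'' are two formulations of the same fact — and then invoke the standard machinery of well-founded recursion to get the rank function $\rho$, finishing with a lexicographic well-order on the fibres. The two proofs therefore hinge on the identical combinatorial content, packaged differently. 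Your version is slightly more ``canonical'' (the rank stratification is uniquely determined, and the single arbitrary choice is localized to well-ordering each fibre) and connects the statement to the general theory of well-founded relations; the paper's version is more elementary and self-contained, needing nothing beyond transfinite recursion and the observation about sources, and produces the order directly without an intermediate rank function. Both are complete; one small presentational point in yours is that the case distinction on whether the $v_i$ are distinct could note explicitly that a closed directed walk of positive length contains a directed cycle, but this is standard and the paper's own one-line proof is no more detailed.
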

\begin{proof}
 Any digraph that contains neither directed cycles nor backward-infinite paths always has a source vertex. To construct the suitable vertex ordering, apply tranfinite recursion: at each step, delete a source from the current digraph. This process terminates when all vertices are removed, and the order of deletion defines the desired ordering.
\end{proof}
\infSzesz
\begin{proof}
By Observation  \ref{obs: well-order}, we can fix a well-order $<$ on $V$ in which $\mathsf{tail}(e)<\mathsf{head}(e)$ for every $e\in E$. 
Let $L$ be a maximal element of $\mathcal{G}(D)$. Suppose for a contradiction 
 that $L$ is not large. It follows from Lemma \ref{lem: largness char}, that there is a $<$-smallest vertex $v\in V\setminus \{ r \}$ for which there is an edge $e \in \delta_{D(E\setminus L)}(v)$ such that $\mathsf{tail}(e)\notin X_{v,D(L)}$.   Lemma \ref{lem: superfillable} provides for $X:=X_{v,D(L)}$ a set $\mathcal{P}$ of edge-disjoint $rX$-paths in $D(L\cup \{ e \})$ such that  $\mathsf{ter}(\mathcal{P})=\delta_{D(L\cup \{ e \})}(X)$. Let $\mathcal{Q}$ be a path system witnessing $\delta_{D(L)}(v)\in \mathcal{G}_D(v)$. 
 \begin{claim}
  If $Q\in \mathcal{Q}$ and $f$ is the last edge of $Q$ that is in $\delta_D(X)$, then $f\in L$. 
 \end{claim}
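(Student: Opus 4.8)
The plan is to prove the claim by contradiction: assume $f\notin L$ and derive a contradiction either with the minimality of $v$ or with the choice of $\mathcal{Q}$. First I would record that $f$ exists at all: since $v$-linked sets are contained in $V\setminus\{r\}$, we have $r\notin X$, while $v\in X=X_{v,D(L)}$, so the $rv$-path $Q$ must enter $X$ and hence has at least one edge in $\delta_D(X)$. Next, and this is the key preparatory step, I would assume that the terminal edge of every path in $\mathcal{Q}$ lies in $\delta_{D(L)}(v)$; this is harmless because $\mathcal{Q}$ is a set of edge-disjoint $rv$-paths covering $\delta_{D(L)}(v)$, so every edge of $\delta_{D(L)}(v)$ is the terminal edge of a unique path of $\mathcal{Q}$, and the remaining (superfluous) paths may simply be discarded.

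Now write $w:=\mathsf{head}(f)$. By the definition of $\delta_D(X)$ we have $w\in X$ and $\mathsf{tail}(f)\notin X$, and $w\neq r$ since $f$ enters $w$. Applying Corollary~\ref{cor: largest bubble} in the digraph $D(L)$ to $w\in X_{v,D(L)}$ gives $X_{w,D(L)}\subseteq X_{v,D(L)}=X$, hence $\mathsf{tail}(f)\notin X_{w,D(L)}$. Under the contradiction hypothesis $f\notin L$, we also have $f\in\delta_{D(E\setminus L)}(w)$, so $w$ together with the edge $f$ witnesses precisely the property that was used to single out $v$ (cf.\ Lemma~\ref{lem: largness char}). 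It therefore suffices to compare $w$ with $v$ in the fixed well-order $<$.

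If $w=v$, then $f$ is the terminal edge of $Q$ (a directed path meets each vertex at most once), so by the way $\mathcal{Q}$ was chosen, $f\in\delta_{D(L)}(v)\subseteq L$, contradicting $f\notin L$. If $w\neq v$, then $w$ precedes $v$ on $Q$; following the terminal segment of $Q$ from $w$ to $v$ and using $\mathsf{tail}(g)<\mathsf{head}(g)$ for every edge $g$ of $D$, the vertices strictly increase along this segment, so $w<v$. But then $w$ is a vertex strictly below $v$ admitting the edge $f\in\delta_{D(E\setminus L)}(w)$ with $\mathsf{tail}(f)\notin X_{w,D(L)}$, contradicting the minimality of $v$. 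In both cases we reach a contradiction, so $f\in L$. The one real subtlety — the ``hard part'', such as it is — is the preparatory reduction on $\mathcal{Q}$: without it the claim would be false (some path of $\mathcal{Q}$ could arrive at $v$ along the edge $e$ itself), so this reduction must be in place before the case analysis is carried out.
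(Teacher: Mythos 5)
Your proposal is correct and follows essentially the same route as the paper: assume $f\notin L$, set $w:=\mathsf{head}(f)$, use $X_{w,D(L)}\subseteq X$ from Corollary~\ref{cor: largest bubble} to get $\mathsf{tail}(f)\notin X_{w,D(L)}$, and contradict the $<$-minimality of $v$.

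You do add a clarification that the paper leaves implicit. The paper's proof asserts ``we must have $w<v$'' directly from reachability, which silently assumes $w\neq v$; equivalently, it implicitly takes $\mathcal{Q}$ to be a \emph{minimal} witness, i.e.\ $\mathsf{ter}(\mathcal{Q})=\delta_{D(L)}(v)$. Your explicit reduction (discard superfluous paths of $\mathcal{Q}$) and the separate treatment of the case $w=v$ make this rigorous. The point is genuinely needed: without the reduction, $\mathcal{Q}$ could contain a path whose terminal edge is $e$ itself (or another edge of $\delta_{D(E\setminus L)}(v)$ entering $X$ from outside), in which case $f$ would be that terminal edge, $f\notin L$, and the claim would fail; moreover the subsequent construction of $\mathcal{R}$ would then reuse $P_e$ and lose edge-disjointness. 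So your ``hard part'' observation is well taken, and your version is a slightly more careful rendering of the same argument.
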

 \begin{proof}
  Suppose for a contradiction that $f\in E\setminus L$, and let $w:=\mathsf{head}(f)$. The path $Q$ witnesses that $v$ is reachable by a directed path from $w$, thus we must have $w<v$. By the $<$-minimal choice of $v$, we know that $\mathsf{tail}(f)\in X_{w,D(L)}$.  Corollary \ref{cor: largest bubble} ensures  that $X_{w,D(L)}\subseteq X$. By combining these we conclude that $\mathsf{tail}(f)\in X$, which contradicts $f\in \delta_D(X)$.
 \end{proof}
 For each path $Q\in \mathcal{Q}$, let $f_Q$ be the last edge of $Q$ in $\delta_D(X)$, and let $Q'$ be the union of the terminal segment of $Q$ starting with $f_Q$ and the unique path in $\mathcal{P}$ whose last edge is $f_Q$. 
  By denoting $P_e$ the unique path in $\mathcal{P}$ with terminal edge $e$, the set $\mathcal{R}:=\{ Q':\ Q\in \mathcal{Q} \}\cup \{ P_e \}$ consists of pairwise edge-disjoint paths in $D$ witnessing 
 $\delta_{D(L)}(v)\cup \{ e \}\in \mathcal{G}_D(v)$. Therefore $L\cup \{ e \}\in \mathcal{G}(D)$, which contradicts the maximality of $L$.
 \end{proof}
We turn to the proof of the second  main result of the paper:
\largeflame
\begin{proof}
    By Observation \ref{obs: well-order}, we can fix a well-order $<$ on $V$ such that $\mathsf{tail}(e)<\mathsf{head}(e)$ for each $e\in E$. Let $\{ v_\alpha:\ \alpha<\alpha^{*} \}$ be the enumeration of $V\setminus \{ r \}$ according to $<$.
    Let $F$ be a flame in $D$. We apply transfinite recursion on $\alpha^{*}$. At step $\alpha<\alpha^{*} $, we delete some ingoing edges of $v_\alpha$ that are not in $F$. Let $L_\alpha \subseteq E$ be what we have left at the beginning of step $\alpha $. Let $\mathcal{P}_\alpha$ be  a system of edge-disjoint paths that covers $\delta_{D(F)}(v_\alpha)$  and witnesses that $X_\alpha:=X_{v_\alpha, D(L_\alpha)}$ is $v_\alpha$-linked in $D(L_\alpha)$ (such a $\mathcal{P}_\alpha$ exists by Proposition \ref{claim: vlinked covers}). We delete those ingoing edges of 
    $v_\alpha$ that are not used by $\mathcal{P}_\alpha$. 
 \begin{observation}\label{obs: deleted in Xalpha}
The tail of each deleted edge is in  $X_\alpha$.   
 \end{observation}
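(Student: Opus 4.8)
The plan is a short argument by contradiction. Suppose that at step $\alpha$ we delete an edge $e$ with $\mathsf{tail}(e)\notin X_\alpha$. By the construction, $e$ is an ingoing edge of $v_\alpha$ that survives into $D(L_\alpha)$, i.e. $e\in\delta_{D(L_\alpha)}(v_\alpha)$, and $e$ is not traversed by any path of $\mathcal{P}_\alpha$.

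First I would record the trivial but essential structural fact: every path in $\mathcal{P}_\alpha$ has $v_\alpha$ as its terminal vertex, so such a path can meet an ingoing edge of $v_\alpha$ only as its last edge --- otherwise it would have to leave $v_\alpha$ and return, which a directed path cannot do (this also follows from acyclicity). Consequently the ingoing edges of $v_\alpha$ used by $\mathcal{P}_\alpha$ are exactly the terminal edges of $\mathcal{P}_\alpha$, and the deleted edges are precisely $\delta_{D(L_\alpha)}(v_\alpha)\setminus\mathsf{ter}(\mathcal{P}_\alpha)$. In particular $e\notin\mathsf{ter}(\mathcal{P}_\alpha)$.

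Next I would exploit the hypothesis $\mathsf{tail}(e)\notin X_\alpha$: since $\mathsf{head}(e)=v_\alpha\in X_\alpha$, the edge $e$ enters $X_\alpha$, that is $e\in\delta_{D(L_\alpha)}(X_\alpha)$. Because $\mathcal{P}_\alpha$ witnesses that $X_\alpha$ is $v_\alpha$-linked in $D(L_\alpha)$, it is a family indexed by $\delta_{D(L_\alpha)}(X_\alpha)$ whose member corresponding to $e$ is a path $P_e$ with initial edge $e$ and terminal vertex $v_\alpha$. But the head of $e$ is already $v_\alpha$, so the same ``no return'' observation forces $P_e$ to consist of the single edge $e$; hence $e\in\mathsf{ter}(\mathcal{P}_\alpha)$, contradicting the previous paragraph.

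I do not anticipate a genuine obstacle here: once one notices that a path ending at $v_\alpha$ whose first edge already points into $v_\alpha$ must be a single edge, the statement is essentially definitional. The only point requiring a little care is matching the informal phrase ``edges used by $\mathcal{P}_\alpha$'' with the formal set $\mathsf{ter}(\mathcal{P}_\alpha)$, which is exactly what the structural fact in the second paragraph accomplishes.
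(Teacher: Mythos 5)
Your proposal is correct and uses essentially the same key fact as the paper: by the definition of $v_\alpha$-linked, every edge of $D(L_\alpha)$ entering $X_\alpha$ is the initial edge of a path in $\mathcal{P}_\alpha$, hence is not among the deleted edges. The intermediate step showing that $P_e$ must be a single edge (so $e\in\mathsf{ter}(\mathcal{P}_\alpha)$) is a harmless but unnecessary detour --- once $e$ is the initial edge of some $P_e\in\mathcal{P}_\alpha$ it is already ``used by $\mathcal{P}_\alpha$'' and therefore not deleted, which is the contradiction.
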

 \begin{proof}
By the definition of $v_\alpha$-linked, every edge in $D(L_\alpha)$ entering $X_\alpha$ must be the initial edge of a path in $\mathcal{P}_\alpha$. Consequently, no ingoing edge of $v_\alpha$ that enters $X_\alpha$ has been deleted.  
 \end{proof}
 \begin{observation}
  There is a set  $\mathcal{Q}_\alpha$ 
      of edge-disjoint $rv$-paths in $D(L_\alpha)$  such that $\mathcal{P}_\alpha$ consists of terminal segments 
      of the paths in $\mathcal{Q}_\alpha$.  In other words,  there are backward-coninuations of the 
              paths in $\mathcal{P}_\alpha$ resulting in a set $\mathcal{Q}_\alpha$
                  of edge-disjoint $rv$-paths in $D(L_\alpha)$.
 \end{observation}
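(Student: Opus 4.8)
The plan is to build $\mathcal{Q}_\alpha$ by attaching to the front of each path of $\mathcal{P}_\alpha$ a ``filling'' path from $r$ whose last edge is that path's initial edge; this is essentially the construction behind Observation~\ref{obs: fillable and bubble}, but carried out so that the prescribed system $\mathcal{P}_\alpha$ survives intact as the family of terminal segments. Since $\mathcal{P}_\alpha$ witnesses that $X_\alpha=X_{v_\alpha,D(L_\alpha)}$ is $v_\alpha$-linked in $D(L_\alpha)$, the definition of $v_\alpha$-linkedness tells us that $\mathcal{P}_\alpha=\{P_e:\ e\in\delta_{D(L_\alpha)}(X_\alpha)\}$, where $P_e$ has initial edge $e$ and terminal vertex $v_\alpha$. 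First I would apply Lemma~\ref{lem: superfillable} to the $r$-rooted digraph $D(L_\alpha)$ to obtain a set $\mathcal{S}=\{S_e:\ e\in\delta_{D(L_\alpha)}(X_\alpha)\}$ of edge-disjoint $rX_\alpha$-paths in $D(L_\alpha)$ with $\mathsf{ter}(\mathcal{S})=\delta_{D(L_\alpha)}(X_\alpha)$, where $S_e$ denotes the unique path with terminal edge $e$. For each $e\in\delta_{D(L_\alpha)}(X_\alpha)$ I set $Q_e:=S_e\cup P_e$ and put $\mathcal{Q}_\alpha:=\{Q_e:\ e\in\delta_{D(L_\alpha)}(X_\alpha)\}$.

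The work then lies in verifying the three required properties — that the members of $\mathcal{Q}_\alpha$ are genuine $rv_\alpha$-paths in $D(L_\alpha)$, that they are pairwise edge-disjoint, and that $\mathcal{P}_\alpha$ is exactly the family of their terminal segments — all of which follow from two ``localisation'' observations. The first is that, after its initial edge, $P_e$ stays inside $X_\alpha$: if $P_e$ left $X_\alpha$ it would have to re-enter it, hence it would use some $e'\in\delta_{D(L_\alpha)}(X_\alpha)$ other than $e$, but $e'$ is the initial edge of $P_{e'}\in\mathcal{P}_\alpha$, contradicting edge-disjointness (and $e'=e$ is impossible since a path traverses $e$ once). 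The second is dual: $S_e$ meets $X_\alpha$ only in its terminal vertex $\mathsf{head}(e)$, since otherwise it would use an edge of $\delta_{D(L_\alpha)}(X_\alpha)$ other than its terminal edge $e$, and that edge is the terminal edge of another path in $\mathcal{S}$. Granting these, $S_e\setminus\{e\}$ lies entirely outside $X_\alpha$ and $P_e\setminus\{e\}$ entirely inside $X_\alpha$, so $S_e$ and $P_e$ share exactly the edge $e$ and meet exactly in $\{\mathsf{tail}(e),\mathsf{head}(e)\}$; hence $Q_e$ is a bona fide $rv_\alpha$-path and $P_e$ is precisely the terminal segment of $Q_e$ starting at $\mathsf{tail}(e)$. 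Pairwise edge-disjointness of $\mathcal{Q}_\alpha$ follows from that of $\mathcal{S}$ and of $\mathcal{P}_\alpha$ via the same inside/outside dichotomy: for $e\neq e'$, the only possible common edge of $Q_e$ and $Q_{e'}$ would be one of the junction edges $e,e'$, and these sit in distinct members of $\mathcal{S}$ and of $\mathcal{P}_\alpha$ respectively.

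I expect the first localisation observation to be the only genuine obstacle: it is what promotes the concatenated walks to honest paths, and it relies on $X_\alpha$ being the $\subseteq$-largest $v_\alpha$-linked set in $D(L_\alpha)$ (so that the witnessing system is indexed exactly by $\delta_{D(L_\alpha)}(X_\alpha)$) together with the fact that a path uses each edge at most once. Everything else is bookkeeping, and in particular neither acyclicity nor the absence of backward-infinite paths enters here — Lemma~\ref{lem: superfillable} already holds in full generality and the concatenation argument is purely combinatorial.
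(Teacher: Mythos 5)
Your proof is correct and takes essentially the same route as the paper: the paper's own justification is simply ``it follows directly from the first part of Lemma~\ref{lem: superfillable},'' i.e.\ use fillability of $X_\alpha$ to prepend an $rX_\alpha$-path to each member of $\mathcal{P}_\alpha$. You spell out the localisation arguments (each $P_e$ stays in $X_\alpha$ after its initial edge, each $S_e$ meets $X_\alpha$ only at its terminal vertex) that make the concatenated walks into genuine edge-disjoint paths, which the paper treats as routine; your closing remark that neither acyclicity nor the absence of backward-infinite paths is needed here is also accurate.
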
       
 \begin{proof}
  It follows directly from the first part of Lemma \ref{lem: superfillable}.
 \end{proof} 
 \begin{observation}\label{obs: no deletion}
 No path in $\mathcal{Q}_\alpha$ uses an ingoing edge of any vertex $v_\beta$ with $\beta>\alpha$. 
\end{observation}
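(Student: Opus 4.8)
The plan is to read the claim off directly from the two facts fixed at the start of the argument: the well-order $<$ on $V$ satisfies $\mathsf{tail}(e)<\mathsf{head}(e)$ for every $e\in E$, and the enumeration $\{v_\alpha:\ \alpha<\alpha^{*}\}$ of $V\setminus\{r\}$ follows $<$, so that $\alpha<\beta$ implies $v_\alpha<v_\beta$.

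First I would recall that the members of $\mathcal{Q}_\alpha$ are $rv_\alpha$-paths (they are the backward-continuations of the paths in $\mathcal{P}_\alpha$, all of which terminate at $v_\alpha$), and that any $rv_\alpha$-path is finite, having both a starting vertex $r$ and a terminal vertex $v_\alpha$. Next, along a finite directed path $x_0x_1\cdots x_n$ the vertices are strictly $<$-increasing, since consecutive vertices are joined by an edge and $\mathsf{tail}<\mathsf{head}$; hence for every $Q\in\mathcal{Q}_\alpha$, each vertex appearing on $Q$ is $\leq v_\alpha$, the terminal vertex of $Q$.

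To conclude, suppose for a contradiction that some $Q\in\mathcal{Q}_\alpha$ uses an ingoing edge $e$ of a vertex $v_\beta$ with $\beta>\alpha$. Then $v_\beta=\mathsf{head}(e)$ is a vertex of $Q$, so $v_\beta\leq v_\alpha$ by the previous step; but $\beta>\alpha$ gives $v_\alpha<v_\beta$, a contradiction. Hence no path in $\mathcal{Q}_\alpha$ uses an ingoing edge of any $v_\beta$ with $\beta>\alpha$.

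I do not anticipate a genuine obstacle: the whole argument is essentially the principle ``vertices increase along a directed path'', and the only point worth flagging is to record that an $rv_\alpha$-path is finite before applying it. (This observation is presumably meant to be used afterwards to show that the paths in $\mathcal{Q}_\alpha$ survive all later deletions, since only ingoing edges of vertices $v_\beta$ with $\beta>\alpha$ get deleted at steps $\beta>\alpha$.)
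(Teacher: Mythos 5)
Your proof is correct and essentially the same as the paper's: the paper observes that $\{r\}\cup\{v_\gamma:\ \gamma\leq\alpha\}$ has no ingoing edges (so $rv_\alpha$-paths never leave it), while you phrase the identical fact as monotonicity of $<$ along a directed path ending at $v_\alpha$. Both hinge solely on the topological well-order from Observation \ref{obs: well-order}.
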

\begin{proof}
 Each path in $\mathcal{Q}_\alpha$ goes from $r$ to $v_\alpha$. By the definition of $<$, the set $\{ r \}\cup \{ v_\gamma:\ \gamma \leq \alpha \}$ has no ingoing edges in $D$. Therefore, paths in $\mathcal{Q}_\alpha$ do not leave the set $\{ r \}\cup \{ v_\gamma:\ \gamma \leq \alpha \}$.
\end{proof}
 We show that $L:= \bigcap_{\alpha<\alpha^{*}}L_\alpha$  is a large flame. Observation 
 \ref{obs: no deletion} ensures that for every $\alpha<\alpha^{*}$, $\mathcal{Q}_\alpha$ is a path system in $D(L)$ (not just in $D(L_\alpha)$). 
 Furthermore,  it is clear from the construction that 
 $\delta_{D(L_{\alpha+1})}(v_\alpha)=\delta_{D(L)}(v_\alpha)=\mathsf{ter}(\mathcal{Q}_\alpha)$  for each  $\alpha<\alpha^{*}$. Therefore, $L$ is a flame. 
 
 We turn to the proof of the largeness of $L$. The set $X_\alpha:=X_{v_\alpha,D(L_{\alpha})}$ is  $v_\alpha$-linked in 
  $D(L)$, since it is $v_\alpha$-linked in $D(L_\alpha)$ witnessed by $\mathcal{P}_\alpha$ and Observation \ref{obs: no deletion} guarantees $\bigcup \mathcal{P}_\alpha \subseteq L$. Hence, $X_{v_\alpha,D(L)} \supseteq X_\alpha $ by the definition of $X_{v_\alpha,D(L)}$. Moreover, Observation \ref{obs: deleted in Xalpha} states that $X_\alpha$ contains the tail of every edge in $\delta_{D(E\setminus L)}(v_\alpha)$. Combining these, we conclude  that $X_{v_\alpha,D(L)} $ also contains the tail of every edge in $\delta_{D(E\setminus L)}(v_\alpha)$. Lemma \ref{lem: largness char} therefore guarantees that $L$ is large.
\end{proof}

\section{Open problems}
We expect that Theorem \ref{thm: main large flame acyc} holds without any restriction on the digraph:
\begin{conjecture}
 In every $r$-rooted digraph $D$, every flame extends to a large flame. In particular, every $r$-rooted digraph $D$ admits a large flame.
\end{conjecture}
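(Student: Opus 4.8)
The plan is to construct the large flame by a single transfinite sweep through the vertices, using only deletions. First I would apply Observation~\ref{obs: well-order} --- this is the one place where acyclicity and the absence of backward-infinite paths are used --- to fix a well-order \(<\) on \(V\) with \(\mathsf{tail}(e)<\mathsf{head}(e)\) for every \(e\in E\), and enumerate \(V\setminus\{r\}\) as \(\{v_\alpha:\ \alpha<\alpha^{*}\}\) accordingly. Starting from the given flame \(F\), I would run a recursion in which stage \(\alpha\) touches only the ingoing edges of \(v_\alpha\); write \(L_\alpha\) for the edge set present at the start of stage \(\alpha\), so that \(L_0=E\) and \(L_\alpha=\bigcap_{\beta<\alpha}L_\beta\) at limits.

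The choice at stage \(\alpha\) is the heart of the argument. Let \(X_\alpha:=X_{v_\alpha,D(L_\alpha)}\) be the largest \(v_\alpha\)-linked set in the current digraph (it exists by Corollary~\ref{cor: largest bubble}). By Proposition~\ref{claim: vlinked covers} I would pick a system \(\mathcal{P}_\alpha\) of edge-disjoint paths that simultaneously covers \(\delta_{D(F)}(v_\alpha)\) and witnesses that \(X_\alpha\) is \(v_\alpha\)-linked, and then delete precisely those ingoing edges of \(v_\alpha\) not used by \(\mathcal{P}_\alpha\) (so, in particular, no edge of \(F\) is ever deleted). Two consequences should follow immediately. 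First, every deleted edge has its tail in \(X_\alpha\): \(v_\alpha\)-linkedness forces every \(L_\alpha\)-edge entering \(X_\alpha\) to be an initial edge of some path of \(\mathcal{P}_\alpha\), hence not deleted. Second, by the first part of Lemma~\ref{lem: superfillable} the set \(X_\alpha\) is fillable, so the paths of \(\mathcal{P}_\alpha\) extend backwards to a system \(\mathcal{Q}_\alpha\) of edge-disjoint \(rv_\alpha\)-paths in \(D(L_\alpha)\) with \(\mathsf{ter}(\mathcal{Q}_\alpha)=\delta_{D(L_{\alpha+1})}(v_\alpha)\).

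Now the well-order does the bookkeeping. Since the set \(\{r\}\cup\{v_\gamma:\ \gamma\le\alpha\}\) has no ingoing edges in \(D\), each path of \(\mathcal{Q}_\alpha\) stays inside it and so uses no ingoing edge of any \(v_\beta\) with \(\beta>\alpha\); hence \(\mathcal{Q}_\alpha\) is untouched by every later stage and survives in \(L:=\bigcap_{\alpha<\alpha^{*}}L_\alpha\). From this I would conclude that \(L\) is a flame: for each \(\alpha\) the edge set \(\delta_{D(L)}(v_\alpha)\) equals \(\mathsf{ter}(\mathcal{Q}_\alpha)\) and is covered by the edge-disjoint \(rv_\alpha\)-paths \(\mathcal{Q}_\alpha\subseteq L\), and \(F\subseteq L\). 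For largeness I would invoke Lemma~\ref{lem: largness char}: \(X_\alpha\) is still \(v_\alpha\)-linked in \(D(L)\) because its witness \(\mathcal{P}_\alpha\) lies inside \(\bigcup\mathcal{Q}_\alpha\subseteq L\), so \(X_{v_\alpha,D(L)}\supseteq X_\alpha\); and by the first consequence above, \(X_\alpha\) --- hence \(X_{v_\alpha,D(L)}\) --- contains the tail of every edge in \(\delta_{D(E\setminus L)}(v_\alpha)\), which is exactly the criterion in Lemma~\ref{lem: largness char}.

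The step I expect to be the real obstacle is ensuring the \emph{coherence} of the transfinite recursion: that the deletions made at one stage do not destroy the path systems built at earlier stages, and that the limit \(L=\bigcap_\alpha L_\alpha\) still carries, vertex by vertex, a valid witness. The resolving device is precisely the invariant that each \(\mathcal{Q}_\alpha\) is confined to the downward-closed vertex set \(\{r\}\cup\{v_\gamma:\ \gamma\le\alpha\}\); together with Lemma~\ref{lem: largness char} (which itself rests on closure of \(v\)-linkedness under unions and increasing chains, Lemmas~\ref{lem: two bubble union} and~\ref{lem: chain bubble union}, and on the fillability in Lemma~\ref{lem: superfillable}), this lets both the flame property and the largeness of \(L\) be checked locally at each \(v_\alpha\), with no global fixed-point or compactness argument needed.
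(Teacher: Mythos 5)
The statement you were asked to prove is the \emph{conjecture} about arbitrary $r$-rooted digraphs $D$, with no structural restriction. Your proposal, however, opens by invoking Observation~\ref{obs: well-order} to obtain a well-order $<$ of $V$ with $\mathsf{tail}(e)<\mathsf{head}(e)$ for every edge $e$. That observation is valid only when $D$ is acyclic and has no backward-infinite paths: a directed cycle immediately forbids such a well-order, and so does a backward-infinite path (it would give an infinite strictly descending sequence). You even flag this yourself --- ``this is the one place where acyclicity and the absence of backward-infinite paths are used'' --- but that is precisely the assumption the conjecture drops, so the argument never gets off the ground for a general $D$. What you have written is, essentially verbatim, the paper's proof of Theorem~\ref{thm: main large flame acyc}, not a proof of the conjecture.

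The dependence on the topological well-order is not cosmetic. The coherence invariant you identify as the crux --- that $\{r\}\cup\{v_\gamma:\ \gamma\le\alpha\}$ has no ingoing edges in $D$, so each $\mathcal{Q}_\alpha$ is confined to already-processed vertices and survives all later deletions --- is exactly the claim that this set is downward-closed under reachability, which is what fails when $D$ contains a cycle or a backward-infinite path. Without it, deletions at stage $\beta>\alpha$ can remove edges of $\mathcal{Q}_\alpha$, so $L=\bigcap_\alpha L_\alpha$ need not carry a witness at $v_\alpha$, and both the flame property and the largeness check via Lemma~\ref{lem: largness char} collapse. Proving the conjecture would require a genuinely different mechanism to control interference between stages (or to dispense with the linear sweep altogether); the paper explicitly leaves this open.
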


For an infinite digraph $D$, the set $\mathcal{G}(D)$ may fail to have a maximal element. This motivates the question if it is 
possible to find an infinite generalisation of Theorem \ref{thm: orig Szesz} that does not rely on the existense of such a maximal element.  Our candidate is the following:
\begin{conjecture}\label{conj: Szesz}
 If  $D$ is an acyclic $r$-rooted digraph that does not contain backward-infinite paths, then every $I\in \mathcal{G}(D)$ is included in a flame of $D$. 
\end{conjecture}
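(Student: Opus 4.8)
The plan is to construct the required flame by transfinite recursion along a well-order $<$ of $V$ with $\mathsf{tail}(e)<\mathsf{head}(e)$ for all $e\in E$ (Observation~\ref{obs: well-order}), in the same spirit as the proof of Theorem~\ref{thm: main large flame acyc}. Write $(v_\alpha)_{\alpha<\alpha^*}$ for the induced enumeration of $V\setminus\{r\}$, put $V_{\le\alpha}:=\{r\}\cup\{v_\gamma:\gamma\le\alpha\}$, and fix $I\in\mathcal{G}(D)$ with $I_v:=I\cap\delta_D(v)\in\mathcal{G}_D(v)$. At step $\alpha$ we decide the set $J_\alpha\subseteq\delta_D(v_\alpha)$ of ingoing edges of $v_\alpha$ to be kept in the eventual flame $F:=\bigcup_{\gamma<\alpha^*}J_\gamma$; because edges point forward in $<$, the digraph $D_{\le\alpha}:=(V_{\le\alpha},\bigcup_{\gamma\le\alpha}J_\gamma)$ is then fully determined and has $v_\alpha$ as a sink. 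The \emph{naive} invariant would be ``$D_{\le\alpha}$ is a flame and $I_{v_\gamma}\subseteq J_\gamma$ for all $\gamma\le\alpha$''; granting it, one concludes as in the proof of Theorem~\ref{thm: main large flame acyc} that $D(F)$ is a flame containing $I$ (the flame condition at each $v_\alpha$ is already witnessed inside $D_{\le\alpha}$ and is not destroyed by later sinks).

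Since $D_{<\alpha}$ is already a flame and $v:=v_\alpha$ becomes a sink of $D_{\le\alpha}$, the recursion step amounts to the following extension problem: given a flame $D'$ on $V_{<\alpha}$ and a prescribed set $I_v\subseteq\delta_D(v)$ (with all tails in $V_{<\alpha}$), find $J_v$ with $I_v\subseteq J_v\subseteq\delta_D(v)$ such that $J_v$ is covered by a system of edge-disjoint $rv$-paths lying in $D'$ except for their last edges in $J_v$ -- equivalently $J_v\in\mathcal{G}_{D'+(v,J_v)}(v)$. The obvious attempt $J_v:=I_v$, routed by a system $\mathcal{S}_v$ witnessing $I_v\in\mathcal{G}_D(v)$, fails because $\mathcal{S}_v$ uses ingoing edges of vertices $v_\gamma$ ($\gamma<\alpha$) that need not belong to the flame $D'$ built so far. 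Hence the naive invariant is too weak: $D'$ must additionally retain enough connectivity to re-route such path systems.

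My candidate for the strengthening is \emph{bubble preservation}: besides being a flame containing $I_{v_\gamma}$ in each $\delta_D(v_\gamma)$, $D_{\le\alpha}$ should satisfy, for every $w\in V_{\le\alpha}$, that the $\subseteq$-largest $w$-linked set of $D_{\le\alpha}$ equals $X_{w,D}\cap V_{\le\alpha}$ and is fillable in $D_{\le\alpha}$ (note that, since $V_{\le\alpha}$ is $<$-initial, a set $X\subseteq V_{\le\alpha}$ has the same entering edges in $D$ and in $D[V_{\le\alpha}]$, and being $w$-linked is the same in both). Under this invariant the extension step should go through along the lines of the proof of Theorem~\ref{thm: main Szesz}: for each $S\in\mathcal{S}_v$ with last edge $f_S=(u_S,v)\in I_v$ one follows $S$ backwards to the first time it enters the relevant bubble $X$ (taking a common bubble via Lemma~\ref{lem: two bubble union} and Corollary~\ref{cor: largest bubble}), discards the initial part, and reattaches the terminal part to a system witnessing jointly the fillability of $X$ in $D'$ and the $u_S$-linkedness of $X$ (Observation~\ref{obs: fillable and bubble}, Proposition~\ref{claim: vlinked covers}, Lemma~\ref{lem: superfillable}); this produces edge-disjoint $rv$-paths inside $D'$ covering $I_v$. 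If some ingoing edge of $v$ then violates bubble preservation at $v$, one enlarges $J_v$ beyond $I_v$ exactly as $L$ is enlarged in the proof of Theorem~\ref{thm: main Szesz}, and re-establishes the invariant using Lemmas~\ref{lem: two bubble union}--\ref{lem: superfillable}.

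The step I expect to be the main obstacle is showing that this (or some sufficient) invariant can actually be \emph{maintained}: at successor stages one must check that committing to $J_\alpha$ does not spoil bubble preservation at vertices $w<v_\alpha$ as well (not only at $v_\alpha$), and -- more delicately -- at limit stages one must verify that the flame property, the bubbles $X_{w,D_{\le\alpha}}$, and fillability all survive the passage to the union $\bigcup_{\beta<\lambda}D_{\le\beta}$; continuity of the largest-$w$-linked-set operator (as in Lemma~\ref{lem: chain bubble union}) is the natural tool, but one must ensure that the identity $X_{w,D_{\le\lambda}}=X_{w,D}\cap V_{\le\lambda}$ does not degrade to a mere inclusion in the limit. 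It is quite possible that bubble preservation is not exactly the right invariant and has to be replaced by a finer one; pinning down a workable invariant is, I believe, where the real difficulty of the conjecture lies.
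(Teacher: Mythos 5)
The statement you are attempting is Conjecture~\ref{conj: Szesz}; the paper explicitly leaves it open, so there is no proof in the text to measure your attempt against. What the paper \emph{does} prove is weaker in two distinct ways: Theorem~\ref{thm: main Szesz} concerns only a maximal element of $\mathcal{G}(D)$ and concludes largeness rather than the flame property, and Theorem~\ref{thm: main large flame acyc} starts from a set that is already a flame, whereas an arbitrary $I\in\mathcal{G}(D)$ need not be one. Your proposal is therefore genuinely reaching beyond the paper, and the transfinite recursion along a well-order with $\mathsf{tail}(e)<\mathsf{head}(e)$ is the natural framework for it.

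As you yourself acknowledge, however, what you have is a sketch with an unverified invariant, not a proof. Two concrete concerns. First, the exact-equality invariant $X_{w,D_{\le\alpha}}=X_{w,D}\cap V_{\le\alpha}$ is very likely too strong to maintain: already in the paper's proof of Theorem~\ref{thm: main large flame acyc} the bubbles $X_{v_\alpha,D(L_\alpha)}$ are taken with respect to the already-thinned $L_\alpha$ and are only shown to contain the tails of the deleted edges (Observation~\ref{obs: deleted in Xalpha}); nowhere is it claimed that they coincide with $X_{v_\alpha,D}$, and one would expect the workable invariant to be the containment form of Lemma~\ref{lem: largness char} rather than equality. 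Second, in your extension step the paths $S\in\mathcal S_v$ may cross $\delta_D(X)$ at different edges and end at different tails $u_S$, so you need a single system inside $D'$ that simultaneously realises fillability and $u_S$-linkedness for all of them, with edge-disjointness surviving the final attachments $f_S\in I_v$; Proposition~\ref{claim: vlinked covers} and Theorem~\ref{thm: Pym} are the right tools, but this has to be argued, not asserted. Until a maintainable invariant and a complete extension step are written out, this remains a plan rather than a proof, consistent with the paper's decision to label the statement a conjecture.
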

 We claim that this is indeed a generalisation of Theorem \ref{thm: orig Szesz}. To see this, let $D$ be a finite acyclic $r$-rooted digraph and let $B$ be a maximal element of $\mathcal{G}(D)$. Since flames are elements of $\mathcal{G}(D)$, no proper superset of $B$ is a flame. Therefore, Conjecture \ref{conj: Szesz} implies that $B$ itself is a flame.  Since in $D(B)$, every $v\in V \setminus \{ r \}$ has $\lambda_D(r,v)$ ingoing edges, $B$ being a flame implies that $B$ is large.

\begin{question}
Is it true that  if $D$ is an acyclic $r$-rooted digraph without backward-infinite paths, then all the maximal elements of $\mathcal{G}(D)$ are flames?
\end{question}

\section*{Acknowledgements}
This work was completed during our visit to the Center for Combinatorics at Nankai University. We are grateful for the warm hospitality and the stimulating research environment provided by the host.

\printbibliography
\end{document}